\definecolor{darkblue}{rgb}{0,0,0.3}
\definecolor{urlblue}{rgb}{0,0,0.7}
\newtheorem{lemma}{Lemma}
\newtheorem{theorem}{Theorem}
\newtheorem{prop}{Proposition}
\newtheorem{corollary}{Corollary}
\theoremstyle{definition}
\newtheorem{remark}{Remark}
\newcommand{\RR}{\mathbb{R}}
\DeclareMathOperator{\Ric}{Ric}
\renewcommand{\tilde}{\widetilde}
\newcommand{\D}{\nabla}
\newcommand{\p}{\partial}
\renewcommand{\L}{\mathcal{L}}
\renewcommand{\th}{\theta}
\newcommand{\metric}[2]{\langle#1,#2\rangle}
\renewcommand{\bar}[1]{\overline{#1}}
\newcommand{\tg}{\tilde{g}}
\newcommand{\tDelta}{{\tilde{\Delta}}}
\newcommand{\tR}{\tilde{R}}
\DeclareMathOperator{\tr}{tr}
\renewcommand{\leq}{\leqslant}
\renewcommand{\geq}{\geqslant}
\newcommand{\tRic}{{\tilde{\Ric}}}
\DeclareMathOperator{\Lip}{Lip}
\newcommand{\tw}{{\tilde{w}}}
\newcommand{\tu}{{\tilde{u}}}
\newcommand{\tth}{{\tilde{h}}}
\newcommand{\tA}{{\tilde{A}}}
\newcommand{\tH}{{\tilde{H}}}
\newcommand{\te}{{\tilde{e}}}
\DeclareMathOperator{\loc}{loc}
\newcommand{\bG}{\overline{G}}
\renewcommand{\SS}{\mathbb{S}}
\title{Connected sum of manifolds with spectral Ricci lower bounds}
\author{Gioacchino Antonelli}
\address{Gioacchino Antonelli
\hfill\break Department of Mathematics, University of Notre Dame, Hurley Hall, 255 Hurley, Notre Dame, IN 46556, United States}
\email{gantonel@nd.edu}
\author{Kai Xu}
\address{Kai Xu
\hfill\break Department of Mathematics, University of California, Berkeley, CA 94720, USA,}
\email{kaixu@berkeley.edu}
\begin{document}

\begin{abstract}
    Let \( n > 2 \), \( \gamma > \frac{n-1}{n-2} \), and \( \lambda \in \mathbb{R} \). We prove that if \( M \) and \( N \) are two smooth \( n \)-manifolds that admit a complete Riemannian metric satisfying
    \[
    -\gamma\Delta + \mathrm{Ric} > \lambda,
    \]
    then the connected sum \( M \# N \) also admits such a metric. The construction geometrically resembles a Gromov-Lawson tunnel; the range \( \gamma > \frac{n-1}{n-2} \) is sharp for this to hold.
\end{abstract}

\maketitle

\section{Introduction}


On a smooth complete $n$-dimensional Riemannian manifold $(M^n,g)$ with $n\geq 2$, let the function $\Ric\in\Lip_{\loc}(M)$ be defined as
\[
\Ric(x):=\inf\Big\{\Ric_x(v,v): v\in T_xM,\,|v|=1\Big\}.
\]
For constants $\gamma\geq0,\lambda\in\mathbb R$, we say that \textit{$M$ satisfies $\lambda_1(-\gamma\Delta+\Ric)\geq \lambda$}
if any of the two following equivalent conditions holds (see \cite{Fischer-Colbrie-Schoen}):
\begin{enumerate}[label={(\roman*)}, topsep=2pt, itemsep=1pt]
    \item for all $\varphi\in C^1_c(M)$ it holds $\int_M\gamma|\D\varphi|^2+\Ric\cdot\varphi^2\geq \int_M \lambda\varphi^2$,
    \item there exists $u\in C^{2,\alpha}(M)$ such that $u>0$ and $-\gamma\Delta u+\Ric\cdot u\geq \lambda u$ on $M$. 
\end{enumerate}
This is a global condition on the Ricci curvature, and is (often strictly) weaker than pointwise Ricci lower bounds (see the examples in \cite{APX24, AX24}). See \cite{APX24, AX24, CatinoMariMastroliaRoncoroni, CarronMondelloTewodrose} for an introduction and recent results on manifolds satisfying the latter condition. The study of Ricci lower bounds in the spectral sense has recently been connected to the study of stable minimal hypersurfaces; we refer to \cite{ChodoshLiBernstein2, CLMS, Mazet}.

In \cite{AX24}, the authors proved the following volume comparison theorem: if
\begin{equation}\label{eq-intro:positivity}
    n\geq3,\quad \gamma\leq\frac{n-1}{n-2},\quad\text{$M$ is closed and satisfies }\lambda_1(-\gamma\Delta+\Ric)\geq n-1,
\end{equation}
then $|M|\leq|\SS^n|$. See \cite{AX24} for more discussions about this result. Here, the bound $\frac{n-1}{n-2}$ for $\gamma$ might draw the most curiosity. It is in fact a sharp threshold: in the supercritical range $\gamma>\frac{n-1}{n-2}$, there exist warped product metrics on $\SS^{n-1}\times\SS^1$ that satisfy $\lambda_1(-\gamma\Delta+\Ric)>0$, and then counterexamples to the volume bound above arise by taking large enough covering spaces: see \cite[Remark 4]{AX24} for more details about this. The same threshold also appeared in Bour--Carron \cite{BourCarron}, where a technique based on the Bochner formula was used to show that $\text{\eqref{eq-intro:positivity}}$ implies $b_1(M)\leq n$ (the relevant statement \cite[Proposition 3.5]{BourCarron} actually inspired the warped-product counterexample above). See also Li--Wang \cite{LiWang} where the same bound of $\gamma$ arises in a different context.

To summarize, the geometric and topological properties associated with the spectral Ricci lower bound appear to undergo a transition at the critical value $\gamma=\frac{n-1}{n-2}$. The aim of this note is to provide further evidence of this. Specifically, we prove the following Gromov--Lawson and Schoen--Yau type connected sum result:

\begin{theorem}\label{thm:tunnel}
    Let $n\geq3$, $\gamma>\frac{n-1}{n-2}$, and $\lambda\in\RR$. Let $(M^n,g)$ be a (possibly non-connected) complete manifold satisfying $\lambda_1(-\gamma\Delta+\Ric)\geq\lambda$. Then for any $x_1,x_2\in M$ and $\varepsilon>0$, there is a complete manifold $(M',g')$ with the following properties:
    \begin{enumerate}[label={(\roman*)}, nosep]
        \item $M'$ is topologically obtained from $M$ by removing two small balls around $x_1,x_2$ and gluing back a tunnel $\mathbb S^{n-1}\times[0,1]$,
        \item $g'=g$ on $M\setminus\big(B(x_1,\varepsilon)\cup B(x_2,\varepsilon)\big)$,
        \item $\lambda_1(-\gamma\Delta_{g'}+\Ric_{g'})\geq\lambda-\varepsilon$.
    \end{enumerate}
\end{theorem}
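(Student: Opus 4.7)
\medskip

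\noindent\textbf{Overall strategy.} I would prove the theorem by constructing the metric $g'$ together with an explicit positive function $u'\in C^{2,\alpha}(M')$ satisfying the pointwise supersolution inequality
\[
-\gamma\Delta_{g'}u' + \Ric_{g'}\cdot u' \geq (\lambda-\varepsilon)\,u',
\]
which by characterization (ii) gives $\lambda_1(-\gamma\Delta_{g'}+\Ric_{g'})\geq\lambda-\varepsilon$. Off the tunnel, $u'$ will be taken equal (up to a positive rescaling) to a positive supersolution $u_0$ for $(M,g)$ furnished by (ii). All the nontrivial work is thus confined to the tunnel.

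\medskip\noindent\textbf{Preparatory step.} After a $C^\infty$-small perturbation of $g$ inside $B(x_i,\varepsilon)$ (and a corresponding perturbation of $u_0$), chosen to cost at most $\varepsilon/2$ in the spectral lower bound, I arrange that near each $x_i$ the metric is \emph{exactly} of warped-product form $ds^2+f_0(s)^2 g_{\SS^{n-1}}$ in polar coordinates and that $u_0$ depends only on $s$. I then glue in a tunnel realized as a warped product
\[
T = \bigl([-L,L]\times\SS^{n-1},\ ds^2+f(s)^2 g_{\SS^{n-1}}\bigr),
\]
with $f$ smooth, symmetric in $s$, equal to a small constant $r_0\ll\varepsilon$ on a middle cylindrical portion, and matching $f_0$ smoothly near $s=\pm L$.

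\medskip\noindent\textbf{The critical ansatz and the main computation.} On the tunnel I use the ansatz
\[
u(s):=f(s)^{-\beta},\qquad \beta:=\frac{n-1}{\gamma},
\]
so that $\gamma\beta=n-1$ and, since $\gamma>\frac{n-1}{n-2}$, one has $\beta<n-2$. A direct calculation with $\Delta u = u''+(n-1)(f'/f)u'$ and $\Ric(\p_s)=-(n-1)f''/f$ shows that the $f''/f$ term cancels, leaving
\[
\frac{-\gamma\Delta u + \Ric(\p_s)\,u}{u} \;=\; (n-1)\bigl((n-2)-\beta\bigr)\Big(\frac{f'}{f}\Big)^{\!2}.
\]
In the $\SS^{n-1}$-directions the analogous quantity contains the additional positive term $(n-2)/f^2$, which dominates whenever $f$ is small. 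Crucially, the coefficient $(n-1)((n-2)-\beta)$ is strictly positive exactly because $\gamma>\frac{n-1}{n-2}$; this is the mechanism through which the sharp threshold enters the proof.

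\medskip\noindent\textbf{Cosine correction, matching, and main obstacle.} On the central cylinder where $f\equiv r_0$ and $f'\equiv 0$ the ansatz alone only yields $0\geq\lambda$, which fails for positive $\lambda$. I fix this by multiplying by a cosine factor: on the neck, set $u(s)=r_0^{-\beta}\cos(\alpha s)$ with $\alpha^2=\max(\lambda,0)/\gamma$, so that $-\gamma\Delta u=\gamma\alpha^2 u$ absorbs $\lambda$ in the $\p_s$-direction while $u$ stays positive, provided the neck length is $<\pi/\alpha$ (vacuous when $\lambda\leq 0$). The two local descriptions of $u$ are patched in a transition zone, and $u$ is then rescaled by a positive constant and smoothed on a collar in order to match $u_0$ on $\partial B(x_i,\varepsilon')$. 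The principal difficulty is the joint design of $f$ and $u$ on the tunnel: one must verify the supersolution inequality in both the $\p_s$- and $\SS^{n-1}$-directions on the transition zones, maintain smoothness at $s=\pm L$, and preserve positivity and the spectral estimate after all the perturbations and cutoffs. All of these can be accommodated because the coefficient $(n-1)((n-2)-\beta)$ is strictly positive, i.e., because $\gamma>\frac{n-1}{n-2}$; at equality this room disappears, in agreement with the sharpness statement in the abstract.
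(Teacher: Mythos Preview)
Your tunnel computation with $u=f^{-\beta}$, $\beta=(n-1)/\gamma$, is correct and does isolate the threshold $\gamma=\frac{n-1}{n-2}$ through the sign of $(n-1)\big((n-2)-\beta\big)$. The difficulties, however, lie precisely in the two bookend steps you describe as routine.

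\textbf{The preparatory step is not a $C^\infty$-small perturbation.} In geodesic polar coordinates $g=\mathrm{d}r^2+r^2h(r,\th)$ one has $h\to h_0$ in $C^0$, but the curvature of $g$ at $x_i$ lives in the next terms of the expansion of $h$. Forcing $h\equiv h_0$ (to make $g$ an exact warped product) therefore changes $\Ric$ by $O(1)$ on $B(x_i,r_0)$, not by $o(1)$. One might absorb this via an IMS--type localization (the Dirichlet gap on a small ball scales like $r_0^{-2}$), but you do not do this; and even then, the supersolution furnished by characterization~(ii) for the \emph{perturbed} metric has no reason to be radial near $x_i$. Averaging over $\SS^{n-1}$ helps only inside the rotationally symmetric region and re-creates a matching problem at its boundary.

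\textbf{The matching at $\p T$ is the crux, not a detail.} Near the tunnel ends $f\approx s$, so your test function behaves like $s^{-\beta}$ and blows up, whereas $u_0$ is bounded. A multiplicative constant equalizes values but not derivatives; cutting off on a collar of width comparable to $s$ introduces errors in $\Delta u$ of the same order $s^{-\beta-2}$ as the favorable term $(n-1)\big((n-2)-\beta\big)(f'/f)^2\,u$, so the claim that ``all of these can be accommodated'' requires a genuine (and delicate) computation that is not supplied.

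\textbf{How the paper sidesteps both issues.} Instead of a bounded $u_0$, the paper solves $-\gamma\Delta u+\Ric\,u=(\lambda-\varepsilon/2)u+\delta_{x_1}+\delta_{x_2}$, so that $u\sim c\,r^{2-n}$ near each $x_i$. The tunnel function is then $\tilde u\propto f(r/r_0)^{2-n}$, with exponent $n-2$ rather than $(n-1)/\gamma$; since $f(x)=|x|$ for $|x|\geq1$, the two expressions for $u$ agree \emph{identically} near the tunnel boundary, so no gluing of the test function is needed at all. Likewise the paper keeps the genuine spherical metric $h_i$ near the ends (writing $\tilde g=\mathrm{d}r^2+r_0^2f(r/r_0)^2\tilde h$ with $\tilde h$ interpolating $h_1,h_0,h_2$), so no flattening of $g$ is required; the deviation of $h_i$ from $h_0$ and of $u/r^{2-n}$ from a constant enters only as controlled $o(r_0^{-2})$ error terms (Lemma~\ref{lemma:asymp_h_w}). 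With exponent $n-2$ the mechanism also changes: now $\Delta_{\tilde g}\tilde u/\tilde u=(2-n)f''/(r_0^2f)$ exactly, so the gain is $\big[\gamma(n-2)-(n-1)\big]\,f''/(r_0^2f)$, coming from the \emph{strict convexity} $f''>0$ rather than from $(f'/f)^2$. This is nonzero at the neck center, which is why the paper needs no cosine correction.
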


The following are its immediate consequences:

\begin{corollary}\label{cor:ConnectedSum}
    Let $n\geq3$, $\gamma>\frac{n-1}{n-2}$, and $\lambda\in\RR$. For $i=1,2$, suppose $(M_i,g_i)$ are (possibly non-connected) complete $n$-manifolds satisfying $\lambda_1(-\gamma\Delta_{M_i}+\Ric_{M_i})>\lambda$. Then $M_1\#M_2$ admits a complete metric satisfying the same spectral Ricci lower bound.
\end{corollary}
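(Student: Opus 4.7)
The corollary is advertised as an immediate consequence of Theorem \ref{thm:tunnel}, and the plan is to make that inference explicit. The key move is to consider the disjoint union $M := M_1 \sqcup M_2$, viewed as a single complete $n$-manifold with (at most) two connected components. This is exactly the kind of input Theorem \ref{thm:tunnel} accepts, and $M_1 \# M_2$ is by construction what one obtains from $M$ by excising a small ball around one point in each component and attaching a tunnel $\SS^{n-1} \times [0,1]$.

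First I would check that $M$ inherits the spectral Ricci lower bound from $M_1$ and $M_2$. Using characterization (i) in the definition, every test function $\varphi \in C^1_c(M)$ decomposes as $\varphi = \varphi_1 + \varphi_2$ with $\varphi_i \in C^1_c(M_i)$, and both $\int |\D\varphi|^2$ and $\int \Ric \cdot \varphi^2$ split as sums over the two components. The strict inequality $\lambda_1(-\gamma\Delta + \Ric) > \lambda$ thus transfers immediately from $M_1$ and $M_2$ to the disjoint union $M$.

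To exploit this strict inequality I pick $\delta > 0$ such that both $(M_i, g_i)$, and hence $M$, satisfy $\lambda_1(-\gamma\Delta + \Ric) \geq \lambda + \delta$. Choose $x_1 \in M_1 \subset M$ and $x_2 \in M_2 \subset M$, and apply Theorem \ref{thm:tunnel} to $M$ with these two points, with the lower bound $\lambda + \delta$ in place of $\lambda$, and with parameter $\varepsilon := \delta/2$. The resulting $(M', g')$ is topologically $M_1 \# M_2$ and satisfies
\[
\lambda_1(-\gamma\Delta_{g'} + \Ric_{g'}) \;\geq\; (\lambda + \delta) - \varepsilon \;=\; \lambda + \tfrac{\delta}{2} \;>\; \lambda,
\]
which is the desired conclusion.

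There is no real obstacle once Theorem \ref{thm:tunnel} is in hand; the only observation worth isolating is that the \emph{strict} inequality assumed on each $M_i$ provides exactly the slack required to absorb the small spectral loss produced by the tunnel construction, so that strict positivity is preserved after gluing.
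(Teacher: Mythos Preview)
Your proposal is correct and is exactly the intended deduction: the paper does not spell out a proof but simply labels the corollary an ``immediate consequence'' of \cref{thm:tunnel}, and your argument (pass to the disjoint union, use the strict inequality to harvest a margin $\delta$, then apply the theorem with $\varepsilon=\delta/2$) is precisely how that implication is meant to be unpacked. There is nothing to add.
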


\begin{corollary}\label{corSn-1S1}
    Let $n\geq3$, $\gamma>\frac{n-1}{n-2}$, $\lambda\in\RR$, and $M$ be a (possibly non-connected) complete $n$-manifold satisfying $\lambda_1(-\gamma\Delta+\Ric)>\lambda$. Then $M\#(\mathbb S^{n-1}\times \mathbb S^1)$ admits a complete Riemannian metric satisfying the same spectral Ricci lower bound.
\end{corollary}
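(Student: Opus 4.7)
The plan is to reduce \Cref{corSn-1S1} to \Cref{cor:ConnectedSum} by exhibiting a complete Riemannian metric on $\SS^{n-1}\times\SS^1$ that satisfies $\lambda_1(-\gamma\Delta+\Ric)>\lambda$. Once such a metric is in hand, \Cref{cor:ConnectedSum} applied with $M_1:=M$ and $M_2:=\SS^{n-1}\times\SS^1$ endowed with that metric immediately yields the sought metric on $M\#(\SS^{n-1}\times\SS^1)$.

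The core step is the construction of a warped product metric $g_0=dt^2+f(t)^2 g_{\SS^{n-1}}$ on $\SS^{n-1}\times\SS^1$, where $f\colon\RR\to(0,\infty)$ is a positive periodic function, such that $\lambda_1(-\gamma\Delta+\Ric)>0$. As indicated in the introduction, this is essentially the supercritical warped-product construction inspired by \cite[Proposition 3.5]{BourCarron} and spelled out in \cite[Remark 4]{AX24}. Concretely, I would write out $\Ric(x)$ of the warped product explicitly in terms of $f,f',f''$, and then verify the spectral condition via characterization (ii) with a radial trial function $u=u(t)$; this reduces to a one-dimensional ODE problem whose positive periodic solutions exist precisely in the range $\gamma>\frac{n-1}{n-2}$.

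To upgrade the resulting bound $\lambda_1>0$ to $\lambda_1>\lambda$, I would scale the metric by $g_0\mapsto c^2 g_0$ for small $c>0$. Under such a rescaling both $\Delta$ and the pointwise function $\Ric(x):=\inf_{|v|=1}\Ric_x(v,v)$ scale by $c^{-2}$, so the principal eigenvalue $\lambda_1(-\gamma\Delta+\Ric)$ also scales by $c^{-2}$. Choosing $c$ small enough therefore produces a metric with $\lambda_1(-\gamma\Delta+\Ric)>\lambda$ regardless of the sign and size of $\lambda$.

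The main obstacle is the warped product construction in the second paragraph: it is the only step that truly uses the supercritical threshold $\gamma>\frac{n-1}{n-2}$ (in the subcritical range no such periodic $f$ can exist, consistently with the volume bound of \cite{AX24}), and the ODE analysis must be executed with care to exhibit an explicit $f$. Everything else reduces to invoking \Cref{cor:ConnectedSum} and the scaling behavior of the spectral operator.
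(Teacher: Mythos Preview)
Your approach is correct, but the paper derives the corollary more directly: simply apply \cref{thm:tunnel} to $M$ itself with $x_1,x_2$ chosen in the \emph{same} connected component. Removing two disjoint balls from $M$ and gluing in a tunnel $\SS^{n-1}\times[0,1]$ is topologically the self-connected-sum $M\#(\SS^{n-1}\times\SS^1)$, so item (i) of \cref{thm:tunnel} already produces the right manifold, and the strict inequality $\lambda_1>\lambda$ absorbs the $\varepsilon$-loss in item (iii). Your route via \cref{cor:ConnectedSum} requires, as an extra ingredient, the warped-product metric on $\SS^{n-1}\times\SS^1$ with $\lambda_1(-\gamma\Delta+\Ric)>0$ and the scaling argument; while that metric does exist (the paper cites it in the introduction), it is not needed as an \emph{input} here---indeed, running the direct argument with $M=\SS^n$ recovers its existence as an \emph{output} of \cref{thm:tunnel}. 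The paper's approach is thus shorter and more self-contained, whereas yours is more modular but leans on an auxiliary construction that itself needs the supercritical hypothesis $\gamma>\frac{n-1}{n-2}$.
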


In particular, this implies that the constant $\frac{n-1}{n-2}$ is sharp in Bour--Carron's Betti number bound \cite[Proposition 3.5]{BourCarron}. Indeed, as a consequence of the corollary above, for every $n\geq 3$, $\gamma>\frac{n-1}{n-2}$, and $k\geq 1$, $\#^k(\mathbb S^{n-1}\times \mathbb S^1)$ admits a metric with $\lambda_1(-\gamma\Delta+\mathrm{Ric})>0$.

The proof of Theorem \ref{thm:tunnel} follows from a tunnel construction in the manner of (but is technically different from) Gromov--Lawson and Schoen--Yau. The toy model comes from an observation of Bour--Carron: as a part of the rigidity result in \cite[Proposition 3.5]{BourCarron}, they observed that the metric $g=\mathrm{d}r^2+f(r)^2g_{\SS^{n-1}}$ and the function $u=f(r)^{2-n}$ together satisfy
\[
-\frac{n-1}{n-2}\Delta_g u+\Ric_g(\p_r,\p_r)\cdot u=0.
\]
This identity holds for an arbitrary $f$. In particular, one can choose $f$ so that $f(r)=|r|$ for all $|r|\geq\varepsilon$. With this choice, one produces a metric on $\RR^n\#\RR^n$ that satisfies $\lambda_1(-\frac{n-1}{n-2}\Delta+\Ric)\geq0$. This is the local picture of our construction in \cref{thm:tunnel}.

To reach the proof of \cref{thm:tunnel}, we have to globalize this model. Given the initial manifold $M$ and basepoints $x_1,x_2$, we remove two tiny balls and glue back a tiny tunnel. Our goal is to find a metric $g'$, and a function $\hat u>0$, so that $-\gamma\Delta_{g'}\hat u+\Ric_{g'}\cdot \hat u\geq(\lambda-\varepsilon)\hat u$. The easy step is to make $g'$ resemble the toy model $\mathrm{d}r^2+f(r)^2g_{\mathbb S^{n-1}}$ near the tunnel. It is less obvious how to merge the eigenfunction. The key observation is that in the toy model, we have $u\sim r^{2-n}$ near the origin. Therefore, as $\varepsilon\to0$, the model degenerates to two copies of $\RR^n$ with a Green's function on each of them. Inspired by this, we realize that one should try to merge a Green's function. Hence, on the initial manifold $M$ we will solve the equation
\[
-\gamma\Delta u+\Ric\cdot u=(\lambda-\varepsilon)u+(\delta_{x_1}+\delta_{x_2}),
\]
and the new function $\hat u$ is produced by merging this solution $u$ with the model solution $f^{2-n}$ near the tunnel.

It is interesting to notice that the warping factor $f$ here can be arbitrary as long as it is convex; see \eqref{eq:def_f} for the precise conditions. This is in contrast with the Gromov--Lawson construction for scalar curvature, where the warping factor is carefully designed to satisfy a certain differential inequality.

In analogy with the higher codimensional surgery result for scalar curvature (see \cite[Theorem A]{Gromov_FourLectures} or \cite[Corollary 6]{SchoenYauManuscripta}), it is interesting to ask if the condition $\lambda_1(-\gamma\Delta+\mathrm{Ric})>\lambda$ is stable under surgery of codimension $\geq k$, provided that $2<k\leq n-1$ and $\gamma>\frac{k-1}{k-2}$. Note that Theorem \ref{thm:tunnel} covers the $k=n$ case. We remark that surgery procedures preserving $\mathrm{Ric}>0$ have been studied, e.g., in \cite{ShaYang, Wraith}, and the recent \cite{Reiser}.

\begin{remark}\label{remLucio}
\cref{cor:ConnectedSum} holds true even if the condition
    \[
    \lambda_1(-\gamma\Delta_{M_i}+\mathrm{Ric}_{M_i})>\lambda
    \]
    is replaced with the weaker
    \[
    -\gamma\Delta_{M_i}+\mathrm{Ric}_{M_i}-\lambda \quad \text{is subcritical on $M_i$},
    \]
    see \cite[Definition 2.2]{CatinoMariMastroliaRoncoroni}.
    The proof follows from a minor modification of the strategy described above. For the sake of readability, let us focus on how the statement and proof of \cref{thm:tunnel} change. Then the improved version of \cref{cor:ConnectedSum} stated in this Remark follows.
    
    If $-\gamma\Delta+\mathrm{Ric}-\lambda$ is subcritical on $M$, then by definition there is a continuous function $W>0$ such that 
    \begin{equation}\label{eqn:One}
    -\gamma\Delta+\mathrm{Ric}-\lambda-W>0.
    \end{equation}
    Then, for every $\varepsilon>0$, by \cite[Theorem 2.3]{CatinoMariMastroliaRoncoroni}, we can find $0<u\in C^2(M\setminus\{x_1,x_2\})$ such that 
    \[
    -\gamma\Delta u + \mathrm{Ric}\cdot u = (W+\lambda-\varepsilon)u+\delta_{x_1}+\delta_{x_2}.
    \]
    The gluing procedure described above, and performed in \cref{prooftunnel}, allows us to produce a metric $\tilde g$ and a function $\tilde u$ such that, for $\varepsilon\ll 1$ we have
    \begin{equation}\label{eqn:Two}
    -\gamma\tilde\Delta\tilde u+\tilde{\mathrm{Ric}}\cdot\tilde u \geq (W+\lambda-2\varepsilon)\tilde u,
    \end{equation}
    close to the tunnel, see \cref{lem:EstimatesSpectralRicciModifiedMetric}. Furthermore, the metric and eigenfunction are unchanged away from the tunnel; hence, there we have
    \[
    -\gamma\tilde\Delta\tilde u+\tilde{\mathrm{Ric}}\cdot\tilde u \geq (W+\lambda)\tilde u.
    \]
    Therefore, since $W$ is continuous and positive, we get that $-\gamma\tilde\Delta+\tilde{\mathrm{Ric}}-\lambda$ is subcritical for small enough $\varepsilon$, as desired.
    %
\end{remark}


\textbf{Acknowledgments}. G.A. has been partially supported by the AMS-Simons Travel Grant, and the NSF DMS Grant No. 2505713. The authors thank Luciano Mari for suggesting the content of \cref{remLucio}.

\section{Proof of {\cref{thm:tunnel}}}\label{prooftunnel}

\subsection{Notations and setups}\label{sec:Prelimnaries}

    Let $x_1,x_2\in M$ and all the parameters $\gamma,\lambda,\varepsilon$ be as in the statement of \cref{thm:tunnel}. By applying \cite[Theorem 2.3 (i)\,$\Leftrightarrow$\,(iv)]{CatinoMariMastroliaRoncoroni}, and since $\mathrm{Ric}\in\mathrm{Lip}_{\mathrm{loc}}(M)$, we can find a Green's function $u\in C^{2,\alpha}(M\setminus\{x_1,x_2\})$ so that
    \begin{equation}\label{eqn:-gammadeltaucontrol}
        u>0,\qquad
        -\gamma\Delta u+\Ric u=(\lambda-\varepsilon/2)u+(\delta_{x_1}+\delta_{x_2}).
    \end{equation}
    Let $r_0\ll\min\big\{1,d(x_1,x_2),\varepsilon\big\}$ be a small radius to be chosen.
    
    In the polar coordinates near $U_i:=B(x_i,r_0)$, $i=1,2$, we write the metric and Green's function in the form
    \begin{equation}\label{eqn:MetricAndGreen}
    g|_{U_i}=\mathrm{d}r^2+r^2h_i(r,\th),\qquad u|_{U_i}=\frac{r^{2-n}\cdot w_i(r,\th)}{\gamma(n-2)|\mathbb S^{n-1}|},\qquad r\in(0,r_0),\ \ \th\in \mathbb S^{n-1},
    \end{equation}
    for some tensor $h_i(r,\th)$ and function $w_i(r,\th)$. For our convenience of the gluing procedure, let us reverse $r\mapsto-r$ in $U_1$, hence assume that $r\in(-r_0,0)$ for $g|_{U_1}$, and $r\in(0,r_0)$ for $g|_{U_2}$. Let $h_0$ denote the round metric on $\mathbb S^{n-1}$.
    
    We will need the following Lemma, whose proof is postponed to \cref{sec:AsymptoticGreen}. The asymptotics in \cref{lemma:asymp_h_w} actually hold in $C^k(h_0)$ for every $k\geq 0$, see the proofs of \cref{lemma-asymp:metric}, and \cref{lemma-asymp:green}. Anyway, the version stated here is sufficient for proving \cref{thm:tunnel}.
    \begin{lemma}\label{lemma:asymp_h_w}
        As $r\to0$, we have the following asymptotics:
        \begin{equation}\label{eq:asymp_h}
            \|h_i-h_0\|_{C^2(h_0)}=o(1),\quad \|\p_rh_i\|_{C^1(h_0)}=o(r^{-1}), \quad \|\p_r^2h_i\|_{C^0(h_0)}=o(r^{-2}),
        \end{equation}
        and
        \begin{equation}\label{eq:asymp_w}
            \|w_i-1\|_{C^2(h_0)}=o(1),\quad \|\p_rw_i\|_{C^0(h_0)}=o(r^{-1}),\quad \|\p_r^2w_i\|_{C^0(h_0)}=o(r^{-2}).
        \end{equation}
    \end{lemma}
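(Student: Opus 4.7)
The proof splits into the two sets of estimates \eqref{eq:asymp_h} and \eqref{eq:asymp_w}, which I plan to treat separately.

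For \eqref{eq:asymp_h}, the plan is the standard normal-coordinate expansion: in normal coordinates $y = (y^1,\dots,y^n)$ at $x_i$, the smooth metric satisfies $g_{jk}(y) = \delta_{jk} + O(|y|^2)$ smoothly in $y$, so passing to polar coordinates $y = r\theta$ gives the decomposition $g = \d r^2 + r^2 h_i(r,\theta)$ with $h_i(r,\theta) - h_0(\theta) = O(r^2)$ smoothly in $(r,\theta)$. This immediately yields $\p_r h_i = O(r)$ and $\p_r^2 h_i = O(1)$ in any $C^k(h_0)$-norm, which is much stronger than what \eqref{eq:asymp_h} asks for.

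For \eqref{eq:asymp_w}, the plan is a blow-up argument. Setting $c_n := (\gamma(n-2)|\SS^{n-1}|)^{-1}$, I would define
\[
    v_\epsilon(y) := \frac{u(\exp_{x_i}(\epsilon y))}{c_n\,\epsilon^{2-n}}, \qquad y \in A := \{\tfrac12 \leq |y| \leq 2\} \subset \RR^n,
\]
so that $v_\epsilon(y) = |y|^{2-n} w_i(\epsilon|y|, y/|y|)$ by \eqref{eqn:MetricAndGreen}. A short computation with the Laplacian scaling shows $L_\epsilon v_\epsilon = 0$ on $A$, where $L_\epsilon := -\gamma \Delta_{\hat g_\epsilon} + \epsilon^2 V_\epsilon$, with $V_\epsilon(y) := (\Ric - \lambda + \varepsilon/2)(\exp_{x_i}(\epsilon y))$ and $\hat g_\epsilon$ the rescaled metric whose components in normal coordinates are $(\hat g_\epsilon)_{jk}(y) = g_{jk}(\epsilon y)$. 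By the metric part, $\hat g_\epsilon \to g_{\mathrm{Eucl}}$ in $C^2_{\loc}$ and $\epsilon^2 V_\epsilon \to 0$ locally uniformly, so $L_\epsilon \to -\gamma \Delta_0$ with controlled $C^{0,\alpha}$ coefficients. Once the baseline $C^0$ convergence $v_\epsilon \to |y|^{2-n}$ on $A$ is in hand, classical Schauder estimates applied to $L_\epsilon v_\epsilon = 0$ upgrade this to $C^{2,\alpha}$ convergence on compact subsets of $A$. Reading off radial and angular derivatives of $v_\epsilon(y) = |y|^{2-n} w_i(\epsilon|y|, y/|y|)$ at $|y|=1$ then produces \eqref{eq:asymp_w}: the angular derivatives give $\|w_i - 1\|_{C^2(h_0)} = o(1)$, while
\[
    \p_\rho v_\epsilon\big|_{|y|=1} = (2-n) w_i(\epsilon,\theta) + \epsilon(\p_r w_i)(\epsilon,\theta) \longrightarrow 2-n
\]
forces $\p_r w_i = o(r^{-1})$ in $C^0(h_0)$, and the analogous second-derivative identity combined with the previous two bounds forces $\p_r^2 w_i = o(r^{-2})$.

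The hard part will be the baseline $C^0$ asymptotic $w_i \to 1$, i.e.\ the leading-order behavior $u(x) \sim c_n d(x_i, x)^{2-n}$ for the Green's function of an elliptic operator with locally Lipschitz coefficients on a Riemannian manifold. My plan would be to prove this by writing $u = c_n r^{2-n} + (u - c_n r^{2-n})$, using the metric-part estimates above to check that $(-\gamma\Delta_g + \Ric - \lambda + \varepsilon/2)(c_n r^{2-n}) = O(r^{2-n})$ near $x_i$, and then running a Moser-type iteration on the equation satisfied by the remainder; alternatively, one could cite a standard Green's-function reference in this setting, such as the results surveyed in \cite{CatinoMariMastroliaRoncoroni}. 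Once this $C^0$ baseline is established, the blow-up plus Schauder machinery above delivers \eqref{eq:asymp_w} essentially automatically.
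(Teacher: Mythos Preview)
Your overall architecture matches the paper's: both split into a metric estimate (the paper's Lemma~\ref{lemma-asymp:metric}) and a Green's-function estimate (Lemma~\ref{lemma-asymp:green}), and both upgrade a $C^0$ statement to $C^2$ via rescaling. For \eqref{eq:asymp_h} your normal-coordinate Taylor expansion is more direct than the paper's rescaling argument and in fact gives the stronger bounds $h_i-h_0=O(r^2)$, $\p_r h_i=O(r)$, $\p_r^2 h_i=O(1)$ in every $C^k(h_0)$; this is fine. For the upgrading step in \eqref{eq:asymp_w}, your Schauder argument on the rescalings $v_\epsilon$ is exactly what the paper does once $C^0$ convergence is known.

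The gap is in the $C^0$ baseline $u\sim c_n r^{2-n}$, and your plan (a) does not close it. Subtracting $c_n r^{2-n}$ and Moser-iterating the remainder $\rho$ only returns $\rho=O(r^{2-n})$, not $o(r^{2-n})$: both $\rho$ and the right-hand side are $O(r^{2-n})$, so no iteration improves the order. Concretely, under your blow-up each subsequential limit of $\epsilon^{n-2}\rho(\epsilon\,\cdot)$ is harmonic on $\RR^n\setminus\{0\}$ with growth $\leq C|y|^{2-n}$, hence of the form $a|y|^{2-n}$; nothing in a Moser scheme forces $a=0$ or even forces different subsequences to give the same $a$. The paper supplies exactly the missing ingredients: first Serrin's results \cite{Serrin,SerrinActa1} for the two-sided bound $C^{-1}r^{2-n}\leq u\leq Cr^{2-n}$; then a barrier $\bar G$ (a radial space-form Green's function) together with the strong maximum principle to show that $\bar\gamma(s):=\max_{s\leq r\leq R}(u-v)/\bar G$ is monotone in $s$, so $\gamma^*=\lim_{s\to0}\bar\gamma(s)$ exists and the \emph{full} family $u_\lambda$ (not just subsequences) converges to $\hat\gamma|y|^{2-n}$; finally an integration of $-\gamma\Delta u+\Ric\cdot u=(\lambda-\varepsilon/2)u+\delta_{x_i}$ against a cutoff to identify $\hat\gamma=c_n$. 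Your option (b) is viable in spirit, but note that \cite{CatinoMariMastroliaRoncoroni} is invoked in the paper only for the \emph{existence} of $u$; the sharp asymptotic with the correct constant is precisely Lemma~\ref{lemma-asymp:green}, whose proof the paper adapts from \cite{MRS22}.
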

    In the expressions \eqref{eq:asymp_h} and \eqref{eq:asymp_w}, the objects $h_i,w_i$ are viewed as $r$-parametrized families of tensors and functions on $\SS^{n-1}$, respectively.
    
    Let us now go on with the construction of the auxiliary functions we will need in the proof. From now on we say that $F=O(r_0^k)$ if $|F|\leq Cr_0^k$ for all $|r|\leq r_0$, where $C$ is a constant independent of $r_0$. We say that $F=o(r_0^k)$ if $|F|\leq r_0^k\varepsilon(r_0)$ for all $|r|\leq r_0$, where $\varepsilon(r_0)$ is a function with $\lim_{r_0\to0}\varepsilon(r_0)=0$.
    
    Fix a cutoff function $0\leq\eta\leq1$ so that $\eta|_{[-1/3,1/3]}\equiv0$ and $\eta|_{(-\infty,-2/3]\cup[2/3,+\infty)}\equiv1$. Define
    \begin{equation}\label{eqn:tildeh}
    \tilde h:=\left\{\begin{aligned}
        & h_1,\qquad -r_0<r<-2r_0/3, \\
        & \eta(r/r_0)h_1+\big(1-\eta(r/r_0)\big)h_0\qquad -2r_0/3\leq r\leq -r_0/3, \\
        & h_0\qquad -r_0/3<r<r_0/3, \\
        & \eta(r/r_0)h_2+\big(1-\eta(r/r_0)\big)h_0\qquad r_0/3\leq r\leq 2r_0/3, \\
        & h_2,\qquad 2r_0/3<r<r_0,
    \end{aligned}\right.
    \end{equation}
    and
    \begin{equation}\label{eqn:tildew}
    \tilde w:=\left\{\begin{aligned}
        & w_1,\qquad -r_0<r<-2r_0/3, \\
        & \eta(r/r_0)w_1+\big(1-\eta(r/r_0)\big)\qquad -2r_0/3\leq r\leq -r_0/3, \\
        & 1\qquad -r_0/3<r<r_0/3, \\
        & \eta(r/r_0)w_2+\big(1-\eta(r/r_0)\big)\qquad r_0/3\leq r\leq 2r_0/3, \\
        & w_2,\qquad 2r_0/3<r<r_0.
    \end{aligned}\right.
    \end{equation}
    These objects are a concatenation of $h_1,h_2$, and $w_1,w_2$. By \cref{lemma:asymp_h_w} and elementary computations, $\tth$ and $\tw$ satisfy the decay
    \begin{equation}\label{eq:asymp_tilde_h_w}
        \begin{aligned}
            & \|\tth-h_0\|_{C^2(h_0)}=o(1),\quad
            \|\p_r\tth\|_{C^1(h_0)}=o(r_0^{-1}),\quad
            \|\p^2_r\tth\|_{C^0(h_0)}=o(r_0^{-2}), \\
            & \|\tw-1\|_{C^2(h_0)}=o(1),\quad
            \|\p_r\tw\|_{C^0(h_0)}=o(r_0^{-1}),\quad
            \|\p^2_r\tw\|_{C^0(h_0)}=o(r_0^{-2}),
        \end{aligned}
    \end{equation}
    when $|r|\leq r_0$, for all sufficiently small $r_0$.
    
    Choose an even function $f\in C^\infty(\RR)$ so that
    \begin{equation}\label{eq:def_f}
        \left\{\begin{aligned}
            &\quad f>0,\quad f|_{(-\infty,-1]}=-x,\quad f|_{[1,\infty)}=x,\\
            &\  |f'|<1,\ f''>0\text{ in }(-1,1), \quad f'''<0 \text{ in }(0,1). \ 
        \end{aligned}\right\}
    \end{equation}
    
    Finally, define the new metric and eigenfunction
    \begin{equation}\label{eqn:gtiildeutilde}
    \tilde g:=\mathrm{d}r^2+r_0^2f(r/r_0)^2\tilde h,\qquad\tilde u:=\frac{r_0^{2-n}f(r/r_0)^{2-n}\tilde w}{\gamma(n-2)|\mathbb S^{n-1}|},\qquad -r_0\leq r\leq r_0.
    \end{equation}
    
    The crucial part of the proof is the following (which corresponds to \cref{thm:tunnel}(iii)):
    \begin{prop}\label{lem:EstimatesSpectralRicciModifiedMetric}
        Let $\gamma,\lambda,\varepsilon,\tilde g,\tilde u$ be as above. Then, for $r_0$ sufficiently small we have
        \[
        -\gamma\Delta_{\tilde g}\tilde u+\tilde\Ric\cdot\tilde u\geq(\lambda-\varepsilon)\tilde u\qquad\text{in}\ \ \big\{|r|\leq r_0\big\}.
        \]
    \end{prop}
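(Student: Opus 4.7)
The plan is to treat $(\tilde g,\tilde u)$ as a $C^2$-small perturbation of the Bour--Carron toy model from the introduction, working in the rescaled coordinate $\rho := r/r_0 \in[-1,1]$. Setting $\hat g := r_0^{-2}\tilde g = d\rho^2 + f(\rho)^2\tilde h$ and $\hat u := \gamma(n-2)|\mathbb S^{n-1}|\,r_0^{n-2}\tilde u = f(\rho)^{2-n}\tilde w$, the estimates \eqref{eq:asymp_tilde_h_w} translate into $C^2$-convergence of $(\hat g,\hat u)$ to the toy model $(g_*,u_*):=(d\rho^2+f(\rho)^2 h_0,\,f(\rho)^{2-n})$ on $\{|\rho|\leq 1\}$ as $r_0\to 0$, and the desired inequality becomes
\[
\mathcal E(\hat g,\hat u) := -\gamma\Delta_{\hat g}\hat u + \inf\Ric_{\hat g}\cdot\hat u \;\geq\; r_0^2(\lambda-\varepsilon)\hat u.
\]
For the model, the warped-product formulas yield $\Delta_{g_*}u_* = -(n-2)f''f^{1-n}$ and $\Ric_{g_*}(\partial_\rho,\partial_\rho) = -(n-1)f''/f$, and the conditions $f''>0,\,|f'|<1$ on $(-1,1)$ ensure that the radial direction achieves $\inf\Ric_{g_*}$, whence
\[
\mathcal E(g_*,u_*) = \bigl[\gamma(n-2)-(n-1)\bigr]\,f''\,f^{1-n},
\]
which is non-negative on $[-1,1]$ by the hypothesis $\gamma > \tfrac{n-1}{n-2}$ and strictly positive on $(-1,1)$.

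On the interior $\{|\rho|\leq 1-\delta\}$ with $\delta>0$ fixed, $f''$ has a positive lower bound depending only on $\delta,f$, so the uniform $o(1)$ difference $\mathcal E(\hat g,\hat u)-\mathcal E(g_*,u_*)$ from \eqref{eq:asymp_tilde_h_w} and the $O(r_0^2)$ right-hand side are both absorbed once $r_0$ is small. The subtle region is the boundary annulus $\{1-\delta\leq|\rho|\leq 1\}$, where the toy-model term degenerates. The crucial observation here is that on $\{|\rho|\geq 2/3\}$ the cutoffs in \eqref{eqn:tildeh}--\eqref{eqn:tildew} are identically $1$, so $\tilde h=h_i$ and $\tilde w=w_i$; hence $(\hat g,\hat u)$ and the rescaled original pair $(\hat g_{\mathrm{orig}},\hat u_{\mathrm{orig}})$ (the rescaled version of $(g|_{U_i},u|_{U_i})$) differ only through the substitution of the radial profile $\rho\leftrightarrow f(\rho)$. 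Because $f$ is $C^\infty$ and agrees with $|x|$ on $|x|\geq 1$, $f(\rho)-\rho$ vanishes to infinite order at $\rho=\pm 1$, so the $C^2$-distance on $\{|\rho|\geq 1-s\}$ between the two pairs is $o(s^N)$ for every $N$. The Green's function equation \eqref{eqn:-gammadeltaucontrol} gives $\mathcal E(\hat g_{\mathrm{orig}},\hat u_{\mathrm{orig}}) = r_0^2(\lambda-\varepsilon/2)\hat u_{\mathrm{orig}}$, leaving a slack $\tfrac{\varepsilon}{2}r_0^2\hat u$ above the target, and a standard perturbation estimate then yields $\mathcal E(\hat g,\hat u)\geq r_0^2(\lambda-\varepsilon)\hat u$ on a small neighborhood $\{|\rho|\geq 1-\delta'(r_0)\}$ of the boundary.

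The main obstacle is the consistency between these two regions: the interior argument requires $\delta$ to stay bounded below independently of $r_0$ so that $f''$ on $\{|\rho|\leq 1-\delta\}$ has a uniform positive lower bound, while the boundary neighborhood $\delta'(r_0)$ shrinks like a power of $r_0$. Closing the gap between them requires the $C^k$-refinement of \cref{lemma:asymp_h_w} noted immediately after its statement, which gives a polynomial bound (in $r_0$) on the $o(1)$ perturbation of $\mathcal E$ in the interior, thereby permitting one to let the cutoff $\delta=\delta(r_0)\to 0$ slowly enough that $f''(1-\delta)$ (which vanishes to infinite order at $\pm 1$) still dominates the interior perturbation. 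Verifying this balance of the flatness order of $f$ at $\pm 1$ against the regularity rates coming from \cref{lemma:asymp_h_w}, and exploiting the spectral shift $\varepsilon/2$ in \eqref{eqn:-gammadeltaucontrol} to create room for the perturbation, is the technical heart of the proposition.
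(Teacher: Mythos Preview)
The overall picture—rescale to $\hat g,\hat u$, compare to the Bour--Carron model in the interior and to the original $(g,u)$ near $|\rho|=1$—is correct, but your scheme for patching the two regions does not close. The claim that ``the $C^k$-refinement of \cref{lemma:asymp_h_w} \dots gives a polynomial bound (in $r_0$) on the $o(1)$ perturbation'' is a misreading: that refinement only upgrades the $\theta$-regularity of the \emph{same} qualitative $o(1)$ estimates; it says nothing about the rate in $r_0$. Without a quantitative rate, the interior threshold $\delta(r_0)$ cannot be pushed toward $0$. Indeed, since $f\in C^\infty(\RR)$ agrees with $|x|$ on $|x|\geq 1$, the function $f''$ vanishes to \emph{infinite} order at $\pm1$, so $\min_{|\rho|\leq 1-\delta}f''$ decays faster than any power of $\delta$; you cannot make it dominate an unspecified $o(1)$ while letting $\delta\to 0$. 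Meanwhile the boundary neighborhood $\delta'(r_0)$ must shrink to $0$ for the $O((1-|\rho|)^N)$ perturbation to fit under the $\tfrac\varepsilon2 r_0^2$ slack. The two regions therefore fail to overlap with the tools you invoke, and the last paragraph of the proposal is not a proof but a restatement of the difficulty.

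The paper sidesteps this entirely by never trying to beat the degeneration of $f''$ with an absolute error. On the annulus $\{2/3\leq|\rho|\leq 1\}$ (where $\tilde h=h_i$, $\tilde w=w_i$) it computes the difference between $-\gamma\tDelta\tu/\tu+\tRic$ and $-\gamma\Delta u/u+\Ric$ explicitly and shows that all error terms have the structured form
\[
o(r_0^{-2})\Big|\frac{\rho f'}{f}-1\Big|+o(r_0^{-2})\Big|\frac{\rho}{f}-1\Big|,
\]
i.e.\ $o(1)$ times factors that vanish at $|\rho|=1$ \emph{at the same rate as} $f''/f$. A short pointwise inequality (\cref{lemma:property_of_f}), using only $|f'|<1$, $f''>0$, $f'''<0$, gives $|\rho f'/f-1|+|\rho/f-1|\leq 2f''/f$ on $[1/2,1]$. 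Thus the toy-model gain $[\gamma(n-2)-(n-1)]f''/f$ dominates the error \emph{uniformly} on the whole annulus once $r_0$ is small, with no matching of shrinking regions and without using the $\varepsilon/2$ slack at all. On $\{|\rho|\leq 2/3\}$, where $f''$ is uniformly positive, your interior argument already suffices. The missing idea in your proposal is precisely this pointwise comparison of the error factors with $f''/f$; it replaces the unworkable $\delta$--$\delta'(r_0)$ balance.
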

    We now give the proof of \cref{thm:tunnel} using \cref{lem:EstimatesSpectralRicciModifiedMetric}. Then, in the next section, we go on with the proof of \cref{lem:EstimatesSpectralRicciModifiedMetric}.

    \begin{proof}[Proof of \cref{thm:tunnel}] {\ }
    Let $u$ be as in \eqref{eqn:-gammadeltaucontrol}, and let us write the metric $g$, and the function $u$ in coordinates as in \eqref{eqn:MetricAndGreen}, with $r_0>0$ small enough. Let $h_0$ be the standard metric on $\mathbb S^{n-1}$, and let $\tilde h,\tilde w$ be defined as in \eqref{eqn:tildeh}, and \eqref{eqn:tildew}, respectively. Finally, let $f$ be as in \eqref{eq:def_f}, and let the new metric $\tilde g$ and function $\tilde u$ be defined as in \eqref{eqn:gtiildeutilde}.

        Let $M'$ be obtained from $M$ by removing $B(x_1,r_0)\cup B(x_2,r_0)$ and gluing back the tunnel $D:=\SS^{n-1}\times[-r_0,r_0]$. Set $g'=\tilde g$ on $D$ and $g'=g$ on $M'\setminus D$. Then set $u'=\tilde u$ on $D$, and $u'=u$ on $M\setminus D$. By construction, $g',u'$ are smooth, and $g'$ satisfies Items (i) and (ii) of \cref{thm:tunnel}. Moreover, by \cref{lem:EstimatesSpectralRicciModifiedMetric} and \eqref{eqn:-gammadeltaucontrol} we have
        \[
        -\gamma\Delta_{g'}u'+\mathrm{Ric}_{g'}u'\geq (\lambda-\varepsilon)u',
        \]
        from which Item (iii) of \cref{thm:tunnel} follows, as desired.
    \end{proof}

    \subsection{Proof of \cref{lem:EstimatesSpectralRicciModifiedMetric}}

    In this section we will use the notation set up in \cref{sec:Prelimnaries}. For convenience, we denote
    \[\begin{aligned}
        & \text{Region I}:=\big\{\!-r_0\leq r\leq -2r_0/3\big\}\cup\big\{2r_0/3\leq r\leq r_0\big\}, \\
        & \text{Region II}:=\big\{|r|\leq 2r_0/3\big\}.
    \end{aligned}\]
    Note that $\eta(r/r_0)\equiv1$ in Region I.
    
    In the calculation below, we let $\tA,\tH$ be the second fundamental form and mean curvature of $\{r=\text{const}\}$ with respect to $\tg$. For the simplicity of formulas, we will always abbreviate $f:=f(r/r_0)$, $f':=f'(r/r_0)$, $f'':=f''(r/r_0)$. Note that $\p_rf=r_0^{-1}f'$. We calculate
    \begin{equation}\label{eq:tA}
        \tA=\frac12\p_r(r_0^2f^2\tth)=r_0ff'\tth+\frac12r_0^2f^2\p_r\tth,
    \end{equation}
    and
    \begin{equation}\label{eq:tH}
        \tH=\tr_{r_0^2f^2\tth}(\tA)=(n-1)\frac{f'}{r_0f}+\frac12\tr_{\tth}(\p_r\tth).
    \end{equation}
    In the following three lemmas, we compute the Ricci curvature of $\tilde g$. 

    \begin{lemma}\label{lem:Ricrr}
        We have
        \begin{align}
            & \text{In Region I:}\quad \tRic(\p_r,\p_r)=\Ric(\p_r,\p_r)-(n-1)\frac{f''}{r_0^2f}+o(r_0^{-2})\Big(\frac{rf'}{r_0f}-1\Big), \label{eq:Ric_rr_region1} \\
            & \text{In Region II:}\quad \tRic(\p_r,\p_r)=-(n-1)\frac{f''}{r_0^2f}+o(r_0^{-2}). \label{eq:Ric_rr_region2}
        \end{align}
    \end{lemma}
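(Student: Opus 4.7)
The plan is to derive one closed-form expression for $\tRic(\p_r,\p_r)$ from the Riccati identity
\[
\tRic(\p_r,\p_r)=-\p_r\tH-|\tA|^2,
\]
and then specialise to the two regions. Using \eqref{eq:tA}--\eqref{eq:tH}, the shape operator reads $\tA^i_j=\frac{f'}{r_0 f}\delta^i_j+\tfrac12(\tth^{-1}\p_r\tth)^i_j$, so a direct expansion gives
\[
|\tA|^2=(n-1)\frac{(f')^2}{r_0^2 f^2}+\frac{f'}{r_0 f}\tr_{\tth}(\p_r\tth)+\tfrac14|\tth^{-1}\p_r\tth|^2,
\]
while $\p_r\tH$ follows from $\p_r\bigl(\frac{f'}{r_0 f}\bigr)=\frac{f''}{r_0^2 f}-\frac{(f')^2}{r_0^2 f^2}$ and $\p_r\tr_{\tth}(\p_r\tth)=\tr_{\tth}(\p_r^2\tth)-|\tth^{-1}\p_r\tth|^2$. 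Adding the two cancels the $(f')^2$ contributions and yields the master identity
\[
\tRic(\p_r,\p_r)=-(n-1)\frac{f''}{r_0^2 f}-\frac{f'}{r_0 f}\tr_{\tth}(\p_r\tth)-\tfrac12\tr_{\tth}(\p_r^2\tth)+\tfrac14|\tth^{-1}\p_r\tth|^2.
\]

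For \eqref{eq:Ric_rr_region2}, I invoke \eqref{eq:asymp_tilde_h_w} in Region II. Since $\tth-h_0=o(1)$ in $C^2$ and $f$ is smooth and bounded below on $[-1,1]$, every $\tth$-dependent tensor in the master formula is uniformly comparable to its $h_0$-counterpart, and $|f'/(r_0 f)|=O(r_0^{-1})$. The bounds $\|\p_r\tth\|_{C^1}=o(r_0^{-1})$ and $\|\p_r^2\tth\|_{C^0}=o(r_0^{-2})$ then force each of the last three terms in the master formula to be $o(r_0^{-2})$, yielding \eqref{eq:Ric_rr_region2}.

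For \eqref{eq:Ric_rr_region1}, in Region I one has $\eta(r/r_0)\equiv1$ and hence $\tth=h_i$, so the ambient metric $g=\d r^2+r^2 h_i$ fits the same master formula, but with warp $\phi(r)=r$ (so $\phi''=0$, $\phi'/\phi=1/r$). Since $\tg$ and $g$ share the angular data $h_i$, subtracting the two expressions cancels the $\p_r^2 h_i$ and $|h_i^{-1}\p_r h_i|^2$ terms exactly and leaves
\[
\tRic(\p_r,\p_r)=\Ric(\p_r,\p_r)-(n-1)\frac{f''}{r_0^2 f}+\Bigl(\frac{1}{r}-\frac{f'}{r_0 f}\Bigr)\tr_{h_i}(\p_r h_i).
\]
The algebraic identity $\frac1r-\frac{f'}{r_0 f}=-\frac1r\bigl(\frac{rf'}{r_0 f}-1\bigr)$, combined with $|r|\geq 2r_0/3\asymp r_0$ and $\|\p_r h_i\|_{C^1}=o(r^{-1})$ from \cref{lemma:asymp_h_w}, gives $\frac{1}{r}\tr_{h_i}(\p_r h_i)=o(r_0^{-2})$, and absorbing the sign into the little-$o$ delivers \eqref{eq:Ric_rr_region1}.

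There is no deep obstacle; the only subtle point is the packaging of the Region-I error as $o(r_0^{-2})\bigl(\frac{rf'}{r_0 f}-1\bigr)$ rather than simply $o(r_0^{-2})$. This factor vanishes at $|r/r_0|=1$ (where $f=1$, $|f'|=1$), which is exactly the interface across which $\tg$ patches onto the ambient cone $\d r^2+r^2 h_i$; retaining the vanishing explicitly is what one needs in the downstream estimates of \cref{lem:EstimatesSpectralRicciModifiedMetric} to match against $\Ric(\p_r,\p_r)$ without incurring a jump.
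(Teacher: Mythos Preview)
Your argument is correct and essentially identical to the paper's: both start from the Riccati identity $\tRic(\p_r,\p_r)=-\p_r\tH-|\tA|^2$, expand to obtain the same master formula (you merely expand $\p_r\big(\tr_\tth(\p_r\tth)\big)$ one step further), bound the $\tth$-terms by \eqref{eq:asymp_tilde_h_w} in Region~II, and in Region~I recover $\Ric(\p_r,\p_r)$ by setting $f(x)=x$ and subtract, factoring the remaining cross term exactly as you do. Your closing remark on why the error is packaged with the factor $\frac{rf'}{r_0f}-1$ is a nice gloss but not part of the proof proper.
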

    \begin{proof}
        Recall the formula $\tRic(\p_r,\p_r)=-\p_r\tH-|\tA|^2$. Using \eqref{eq:tA} and \eqref{eq:tH}, this gives
        \begin{equation}\label{eq:Ric_rr_aux1}
            \tRic(\p_r,\p_r)=-(n-1)\frac{f''}{r_0^2f}-\frac{f'}{r_0f}\tr_\tth(\p_r\tth)-\frac14|\p_r\tth|^2_\tth-\frac12\p_r\big(\tr_\tth(\p_r\tth)\big).
        \end{equation}
        Combined with \eqref{eq:asymp_tilde_h_w} we get \eqref{eq:Ric_rr_region2}, since in Region II we have
        \[
        \frac{f'}{r_0f}=O(r_0^{-1}),\quad \p_r\tth=o(r_0^{-1}),\quad \p_r^2\tth=o(r_0^{-2}).
        \]
        In Region I we have $\tth=h_i$, so the original $\Ric(\p_r,\p_r)$ is obtained from \eqref{eq:Ric_rr_aux1} by plugging in $f(x)=x$ (notice that one should really plug in $f=r/r_0$, $f'=1$, $f''=0$ in \eqref{eq:Ric_rr_aux1}). This gives
        \begin{equation}\label{eq:Ric_rr_aux2}
            \Ric(\p_r,\p_r)=-\frac1r\tr_\tth(\p_r\tth)-\frac14|\p_r\tth|^2_\tth-\frac12\p_r\big(\tr_\tth(\p_r\tth)\big).
        \end{equation}
        Subtracting \eqref{eq:Ric_rr_aux1} with \eqref{eq:Ric_rr_aux2} and noticing that $|r^{-1}|\leq2r_0^{-1}$ in Region I, we obtain \eqref{eq:Ric_rr_region1}.
    \end{proof}

    In Lemma \ref{lem:Ricee} and \ref{lem:Ricre} below, we let $e$ be any $\tth$-unit tangent vector of $\mathbb S^{n-1}$, and $\te=e/r_0f$ (which is a $r_0^2f^2\tth$-unit vector).

    \begin{lemma}\label{lem:Ricee}
        We have:
        \begin{align}
            & \begin{aligned}
                &\text{In Region I:}\quad \tRic(\te,\te)=\Ric(e/r,e/r)-\frac{f''}{r_0^2f}+\frac{n-2}{r_0^2f^2}\big(1-(f')^2\big) \\
                &\hspace{180pt} +o(r_0^{-2})\Big(\frac{rf'}{r_0f}-1\Big)+o(r_0^{-2})\Big(\frac{r^2}{r_0^2f^2}-1\Big),
            \end{aligned}\label{eq:Ric_ee_region1} \\
            & \text{In Region II:}\quad \tRic(\te,\te)=-\frac{f''}{r_0^2f}+\frac{n-2}{r_0^2f^2}\big(1-(f')^2\big)+o(r_0^{-2}). \label{eq:Ric_ee_region2}
        \end{align}
    \end{lemma}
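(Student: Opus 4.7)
The argument parallels \cref{lem:Ricrr}, using the tangential analog of the Riccati identity. The starting point is the general formula
\[
\tRic(X, X) = \Ric^\Sigma(X, X) - \p_r \tA(X, X) + 2|\tA(X, \cdot)|^2 - \tH \cdot \tA(X, X),
\]
valid for any unit vector $X$ tangent to a level set $\Sigma = \{r = \text{const}\}$ in a metric of the form $\d r^2 + g_r$; here $\Ric^\Sigma$ and $|\cdot|$ are taken with respect to the induced metric $g_r$. This identity comes from the Gauss equation combined with the Riccati equation $\p_r \tA + \tA^2 + \tR_{\p_r} = 0$ for the shape operator, the same ingredients underlying \cref{lem:Ricrr}.

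Apply this to $\te = e/(r_0 f)$ (with $e$ being $\tth$-unit) for $\tg = \d r^2 + r_0^2 f^2 \tth$, and expand each piece via \eqref{eq:tA}--\eqref{eq:tH}. The main terms treat $\tth$ as if it were fixed, giving the standard warped-product contribution
\[
\frac{\Ric^\tth(e, e)}{r_0^2 f^2} - \frac{f''}{r_0^2 f} - (n-2)\frac{(f')^2}{r_0^2 f^2},
\]
while the remaining terms carry factors of $\p_r \tth$ or $\p_r^2 \tth$. Using \eqref{eq:asymp_tilde_h_w}, together with $f'/(r_0 f) = O(r_0^{-1})$ throughout Region II, each such correction is bounded by $o(r_0^{-2})$. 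In Region II we moreover have $\tth - h_0 = o(1)$ in $C^2$, which yields $\Ric^\tth(e, e) = n - 2 + o(1)$. Combining these facts proves \eqref{eq:Ric_ee_region2}.

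For Region I, where $\tth = h_i$ and the original metric takes the form $g = \d r^2 + r^2 h_i$ (same general shape as $\tg$, but with warping $\phi = r$ satisfying $\phi' = 1$, $\phi'' = 0$), I would apply the same general identity to $g$ to obtain an analogous expansion of $\Ric(e/r, e/r)$; its corrections involve the same derivatives of $h_i$ but modulated by $1/r$ in place of $f'/(r_0 f)$, while the main part reduces to $\Ric^{h_i}(e,e)/r^2 - (n-2)/r^2$. Subtracting this from the expansion for $\tRic(\te, \te)$, the $\Ric^{h_i}$ contributions combine, using $\tfrac{1}{r_0^2 f^2} - \tfrac{1}{r^2} = \tfrac{1}{r^2}\bigl(\tfrac{r^2}{r_0^2 f^2} - 1\bigr)$ and $\Ric^{h_i}(e,e) = n-2 + o(1)$, into the explicit term $-(n-2)(f')^2/(r_0^2 f^2) + (n-2)/(r_0^2 f^2)$ of \eqref{eq:Ric_ee_region1} plus an error $o(r_0^{-2})\bigl(r^2/(r_0^2 f^2) - 1\bigr)$. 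Likewise, each correction involving $\p_r h_i$ pairs with its $g$-counterpart and differs by a factor $f'/(r_0 f) - 1/r = (rf'/(r_0 f) - 1)/r$, producing an error of the form $o(r_0^{-2})\bigl(rf'/(r_0 f) - 1\bigr)$; the $\p_r^2 h_i$ terms cancel identically between the two expansions. Collecting yields \eqref{eq:Ric_ee_region1}.

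The main obstacle is the careful bookkeeping in Region I: every correction must reorganize into one of the two prescribed factors $rf'/(r_0 f) - 1$ or $r^2/(r_0^2 f^2) - 1$, with no stray $o(r_0^{-2})$ residue lacking such a factor. No single computation is hard, but the structure has to be tracked precisely (and consistency with Region II near the shared boundary $|r| = 2r_0/3$, where both factors remain $O(1)$, is automatic).
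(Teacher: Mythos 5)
Your proposal is correct and follows essentially the same route as the paper: the Gauss equation combined with the Riccati/variation-of-second-fundamental-form identity, an explicit expansion in terms of $f$, $\p_r\tth$, $\p_r^2\tth$, and then (in Region I) subtraction of the same expansion with warping $r$ in place of $r_0f(r/r_0)$ so that the corrections organize into the factors $\frac{rf'}{r_0f}-1$ and $\frac{r^2}{r_0^2f^2}-1$, with $\Ric_\tth(e,e)=(n-2)+o(1)$ supplying the explicit $\frac{n-2}{r_0^2f^2}\big(1-(f')^2\big)$ term. The bookkeeping you flag as the main obstacle is exactly what the paper's displayed expansion carries out, and your observations (cancellation of the $\p_r^2\tth$ terms, pairing of the $f'/(r_0f)$ and $1/r$ coefficients) are the correct resolution.
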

    \begin{proof}
        By Gauss equation
        \[\tRic(\te,\te)=\tR(\te,\p_r,\p_r,\te)+\Ric_{r_0^2f^2\tth}(\te,\te)-\tH\tA(\te,\te)+\tA^2(\te,\te).\]
        The first term is calculated by the variation of second fundamental form:
        \[   \tR(\te,\p_r,\p_r,\te)=\tA^2(\te,\te)-(\L_{\p_r}\tA)(\te,\te).
        \]
        Here we can further calculate by \eqref{eq:tA}
        \[(\L_{\p_r}\tA)(\te,\te)=\frac{f''}{r_0^2f}+\frac{(f')^2}{r_0^2f^2}+2\frac{f'}{r_0f}\p_r\tth(e,e)+\frac12\p_r^2\tth(e,e).\]
        Next, clearly $\Ric_{r_0^2f^2\tth}(\te,\te)=\Ric_\tth(\te,\te)=r_0^{-2}f^{-2}\Ric_\tth(e,e)$. Let $\{e_i\}$ be a $\tth$-orthonormal frame with $e=e_1$. We compute $\tA^2(\te,\te)$:
        \[\begin{aligned}
            \tA^2(\te,\te)
            &= \frac1{r_0^2f^2}\sum_i\tA(e_1,e_i)^2
            = \Big[\frac{f'}{r_0f}+\frac12\p_r\tth(e_1,e_1)\Big]^2+\sum_{i\geq 2}\Big[\frac12\p_r\tth(e_1,e_i)\Big]^2 \\
        &= \frac{(f')^2}{r_0^2f^2}+\sum_i\Big[\frac12\p_r\tth(e_1,e_i)\Big]^2+\frac{f'}{r_0f}\p_r\tth(e_1,e_1).
        \end{aligned}\]
        Combining all these results and using \eqref{eq:tA} and \eqref{eq:tH} to compute $\tH$, $\tA(\te,\te)$, we eventually get 
        \[\begin{aligned}
            \tRic(\te,\te)
            &= -\frac{f''}{r_0^2f}-(n-2)\frac{(f')^2}{r_0^2f^2}-\frac{f'}{r_0f}\Big[\frac{n-1}2\p_r\tth(e,e)+\frac12\tr_\tth(\p_r\tth)\Big] \\
            &\qquad +\frac1{r_0^2f^2}\Ric_\tth(e,e)-\frac12\p_r^2\tth(e,e)+2\sum_i\Big[\frac12\p_r\tth(e_1,e_i)\Big]^2.\\
            &\qquad -\frac{1}{4}\partial_r\tilde h(e,e)\mathrm{tr}_{\tilde h}(\partial_r\tilde h).
        \end{aligned}\]
        In Region II, the result follows by bounding the error terms using \eqref{eq:asymp_tilde_h_w}. In Region I, again, setting $f(x)=x$ this formula yields $\Ric(e/r,e/r)$. The result follows by extracting the difference and further noticing $\Ric_\tth(e,e)=(n-2)+o(1)$ by \eqref{eq:asymp_tilde_h_w}.
    \end{proof}

    \begin{lemma}\label{lem:Ricre}
        We have
        \begin{align}
            & \text{In Region I:}\quad \tRic(\p_r,\te)=\Ric(\p_r,e/r)+o(r_0^{-2})\Big(\frac{r}{r_0f}-1\Big), \label{eq:Ric_re_region1} \\
            & \text{In Region II:}\quad \tRic(\p_r,\te)=o(r_0^{-2}). \label{eq:Ric_re_region2}
        \end{align}
    \end{lemma}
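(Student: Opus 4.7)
The plan is to mirror the proofs of \cref{lem:Ricrr} and \cref{lem:Ricee}, but starting from the Codazzi-derived identity for the mixed Ricci component. Writing $\Sigma_r := \{r = \text{const}\}$ equipped with its induced metric $r_0^2 f^2 \tth$, one has
\[
\tRic(\p_r, X) = X(\tH) - (\div^{\Sigma_r} \tA)(X), \qquad X \in T\Sigma_r,
\]
where $\div^{\Sigma_r}$ is the intrinsic covariant divergence. I would apply this with $X = \te$ and substitute \eqref{eq:tA}, \eqref{eq:tH}.

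The leading piece $(n-1) f'/(r_0 f)$ of $\tH$ depends only on $r$ and is annihilated by $\te$. Similarly, the leading piece $r_0 f f' \tth$ of $\tA$ is a scalar multiple of the induced metric along $\Sigma_r$, with scalar constant in $\theta$, and so has vanishing intrinsic divergence. Using that the Levi-Civita connection of $r_0^2 f^2 \tth$ on $\Sigma_r$ equals that of $\tth$, the surviving contributions combine into
\[
\tRic(\p_r, \te) = \frac{1}{r_0 f}\Bigl[\tfrac{1}{2}\, e\bigl(\tr_\tth(\p_r \tth)\bigr) - \tfrac{1}{2}\bigl(\div^{\tth}(\p_r \tth)\bigr)(e)\Bigr].
\]
By \eqref{eq:asymp_tilde_h_w}, the bracket is $o(r_0^{-1})$ and the prefactor is $O(r_0^{-1})$, which gives \eqref{eq:Ric_re_region2} directly.

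For Region I we have $\tth = h_i$, and the same identity applied to the background warped product $g = \mathrm{d}r^2 + r^2 h_i$ (formally setting $f(r/r_0)=r/r_0$) yields
\[
\Ric(\p_r, e/r) = \frac{1}{r}\Bigl[\tfrac{1}{2}\, e\bigl(\tr_{h_i}(\p_r h_i)\bigr) - \tfrac{1}{2}\bigl(\div^{h_i}(\p_r h_i)\bigr)(e)\Bigr].
\]
The brackets in the two displays agree, so the expressions differ only through the normalization ratio $r/(r_0 f)$ between $\te = e/(r_0 f)$ and $e/r$, giving
\[
\tRic(\p_r, \te) - \Ric(\p_r, e/r) = \Bigl(\tfrac{r}{r_0 f} - 1\Bigr)\,\Ric(\p_r, e/r).
\]
By \eqref{eq:asymp_h} and $|r|^{-1} \leq 2 r_0^{-1}$ in Region I, $\Ric(\p_r, e/r) = r^{-1} \cdot o(r^{-1}) = o(r_0^{-2})$, and \eqref{eq:Ric_re_region1} follows.

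The main obstacle, as in the previous two lemmas, is purely organizational book-keeping: confirming that the $r_0$-factor in $\tA$ exactly cancels the conformal factor in the inverse induced metric (so the divergence piece is warping-independent), controlling every residual $\nabla^{\tth}(\p_r \tth)$-type term uniformly as $o(r_0^{-2})$ via \eqref{eq:asymp_tilde_h_w}, and cleanly isolating the rescaling factor $(r/(r_0 f) - 1)$ from the two different choices of unit tangent vector.
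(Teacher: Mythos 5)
Your proof is correct and follows essentially the same route as the paper: the paper writes the contracted Codazzi equation as $\tRic(\p_r,\te)=\sum_i\frac1{r_0^3f^3}\big[\D_e\tA(e_i,e_i)-\D_{e_i}\tA(e,e_i)\big]$, which is exactly your $\te(\tH)-(\div^{\Sigma_r}\tA)(\te)$, and likewise uses that $\D^{r_0^2f^2\tth}\tA=\D^\tth\tA$ so that only the $\frac12 r_0^2f^2\p_r\tth$ part of $\tA$ survives, yielding the same expression $\frac1{2r_0f}\big[e(\tr_\tth(\p_r\tth))-(\div^\tth(\p_r\tth))(e)\big]$. Your Region I step of isolating the factor $\frac{r}{r_0f}-1$ and the Region II estimate via \eqref{eq:asymp_tilde_h_w} match the paper's (slightly terser) conclusion.
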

    \begin{proof}
        Let $\{e_i\}$ be a $\tth$-orthonormal frame with $e=e_1$ as in \cref{lem:Ricee}. We have
        \[\begin{aligned}
            \tRic(\p_r,\te)
            &=\sum_i \frac1{r_0^3f^3}\tR(\p_r,e_i,e_i,e)
            = \sum_i\frac1{r_0^3f^3}\Big[\D_e^{r_0^2f^2\tth}\tA(e_i,e_i)-\D_{e_i}^{r_0^2f^2\tth}\tA(e,e_i)\Big].
        \end{aligned}\]
        By \eqref{eq:tA} we have $\D^{r_0^2f^2\tth}\tA=\D^\tth\tA=\D^\tth\big(\frac12r_0^2f^2\p_r\tth\big)$. So
        \[\tRic(\p_r,\te)=\sum_i\frac1{2r_0f}\Big[\D_e^\tth(\p_r\tth)(e_i,e_i)-\D_{e_i}^\tth(\p_r\tth)(e,e_i)\Big].\]
        Again, setting $f(x)=x$ this formula yields $\Ric(\partial_r,e/r)$. In Region II the right hand side is bounded by \eqref{eq:asymp_tilde_h_w}, and in Region I the result follows as well.
    \end{proof}

\begin{corollary}
    We have
    \begin{align}
            & \text{In Region I:}\quad \tRic\geq\Ric-(n-1)\frac{f''}{r_0^2f}-o(r_0^{-2})\Big|\frac{rf'}{r_0f}-1\Big|-o(r_0^{-2})\Big|\frac{r}{r_0f}-1\Big|, \label{eq:Ric_region1} \\
            & \text{In Region II:}\quad \tRic\geq-(n-1)\frac{f''}{r_0^2f}-o(r_0^{-2}). \label{eq:Ric_region2}
        \end{align}
\end{corollary}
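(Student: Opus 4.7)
The statement is a pointwise lower bound on the quadratic form $\tRic$, so it suffices to prove $\tRic(v,v) \geq (\text{RHS})$ for every $\tg$-unit vector $v$. My plan is to combine the three component estimates from \cref{lem:Ricrr,lem:Ricee,lem:Ricre} by decomposing $v$ into radial and angular parts. At any point in the tunnel, I would write $v = a\p_r + b\te$, where $\te = e/(r_0 f)$ for a $\tth$-unit angular vector $e$ and $a^2 + b^2 = 1$, and then expand
\[
\tRic(v,v) = a^2\,\tRic(\p_r,\p_r) + 2ab\,\tRic(\p_r,\te) + b^2\,\tRic(\te,\te).
\]

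For Region II, the three lemmas give clean bounds without any $\Ric_g$ contribution. I would first drop the nonnegative term $(n-2)(1-(f')^2)/(r_0^2 f^2)$ in \eqref{eq:Ric_ee_region2}, which is nonnegative by $|f'|<1$ in \eqref{eq:def_f}, and then use $f''\geq 0$ to weaken $-f''/(r_0^2 f)$ to $-(n-1)f''/(r_0^2 f)$. The cross term is controlled by $|2ab|\cdot o(r_0^{-2})$, and summing with weights $a^2,b^2$ (which total $1$) yields \eqref{eq:Ric_region2}.

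For Region I, the same mechanism applies, but now an extra $\Ric_g$ contribution appears, and the key step is to combine these terms bilinearly into $\Ric_g(a\p_r + be/r,\,a\p_r + be/r)$. The decisive observation is that this argument is itself a $g$-unit vector: since $\tth = h_i$ in Region I, the metric there is precisely $g = \d r^2 + r^2 \tth$, so $|e/r|_g = 1$ and $|a\p_r + be/r|_g = 1$. Hence this contribution is bounded below by $\Ric$, the pointwise infimum from the definition at the start of the paper. The $f''$ and cross terms are handled as in Region II, while the additional error $o(r_0^{-2})|r^2/(r_0^2 f^2) - 1|$ coming from \eqref{eq:Ric_ee_region1} is absorbed into $o(r_0^{-2})|r/(r_0 f) - 1|$ by factoring $r^2/(r_0^2 f^2) - 1 = (r/(r_0 f) - 1)(r/(r_0 f) + 1)$ and using uniform boundedness of $r/(r_0 f)$ on Region I (where $|r|/r_0 \in [2/3,1]$). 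The only truly non-mechanical ingredient is the identification of $a\p_r + be/r$ as a $g$-unit vector so that the pointwise infimum $\Ric$ may be applied; the remainder is careful bookkeeping of signs and error terms.
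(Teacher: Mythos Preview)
Your proposal is correct and matches the paper's approach: the paper's proof is a one-line sketch citing \cref{lem:Ricrr,lem:Ricee,lem:Ricre}, the conditions $|f'|<1$, $f''>0$ from \eqref{eq:def_f}, and the factoring inequality $\big|\tfrac{r^2}{r_0^2f^2}-1\big|\leq O(1)\big|\tfrac{r}{r_0f}-1\big|$, and you have spelled out precisely how these ingredients combine via the bilinear decomposition $v=a\p_r+b\te$. Your observation that $a\p_r+be/r$ is a $g$-unit vector in Region~I (since $\tth=h_i$ there) is exactly the point needed to invoke the pointwise infimum $\Ric$, and is implicit in the paper's terse statement.
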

\begin{proof}
    It is an immediate corollary of \cref{lem:Ricrr}, \cref{lem:Ricee}, and \cref{lem:Ricre}, the fact that $|f'|<1$, $f''>0$ by \eqref{eq:def_f}, and the inequality $\big|\frac{r^2}{r_0^2f^2}-1\big|\leq O(1)\big|\frac{r}{r_0f}-1\big|$.
\end{proof}

We next compute the quantity $\frac{\tDelta\tu}{\tu}$.

    \begin{lemma}\label{lem:AsymptoticsDeltauu}
        We have
        \begin{align}
            & \text{In Region I:}\quad \frac{\tDelta\tu}{\tu}=\frac{\Delta u}u+(2-n)\frac{f''}{r_0^2f}+o(r_0^{-2})\Big(\frac{rf'}{r_0f}-1\Big)+o(r_0^{-2})\Big(\frac{r}{r_0f}-1\Big),\label{eq:Delta_u_region1} \\
            & \text{In Region II:}\quad \frac{\tDelta\tu}{\tu}=(2-n)\frac{f''}{r_0^2f}+o(r_0^{-2}). \label{eq:Delta_u_region2}
        \end{align}
    \end{lemma}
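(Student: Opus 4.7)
The plan is to derive a single master formula for $\tDelta v/v$ valid on any metric of the form $\d r^2+\rho(r)^2 k(r,\theta)$ applied to any function of the form $v=C\rho^{2-n}\phi$, and then specialize it twice: once with $(\rho,k,\phi)=(r_0 f,\tth,\tw)$ to compute $\tDelta\tu/\tu$, and once with $(\rho,k,\phi)=(r,h_i,w_i)$ (equivalently $f(x)=x$, $f'\equiv 1$, $f''\equiv 0$) to recover $\Delta u/u$. In Region I both specializations are live, since there $\tth=h_i$ and $\tw=w_i$, so the desired expression will come out by a clean subtraction; in Region II only the first is needed, and every term but the leading one should become an $o(r_0^{-2})$ error.

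For the metric $\d r^2+\rho^2 k$ the Laplacian decomposes as $\tDelta v=\p_r^2 v+\tH\p_r v+\rho^{-2}\Delta_k v$ with $\tH=(n-1)\rho'/\rho+\tfrac12\tr_k(\p_r k)$ as in \eqref{eq:tH}. Inserting $v=\rho^{2-n}\phi$ and expanding, the coefficient of the a priori divergent $(\rho'/\rho)^2$ term is $(2-n)(1-n)+(n-1)(2-n)$, which vanishes identically. This is precisely the Bour--Carron cancellation flagged in the introduction, and it is the single reason the construction survives near the tunnel. What remains is
\begin{equation*}
\frac{\tDelta v}{v}=(2-n)\frac{\rho''}{\rho}+(3-n)\frac{\rho'}{\rho}\frac{\p_r\phi}{\phi}+\frac{\p_r^2\phi}{\phi}+\frac{2-n}{2}\frac{\rho'}{\rho}\tr_k(\p_r k)+\frac12\tr_k(\p_r k)\frac{\p_r\phi}{\phi}+\frac{\Delta_k\phi}{\rho^2\phi}.
\end{equation*}

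For Region II, where $|r|\leq 2r_0/3$, one has $f(r/r_0)\in[f(0),f(2/3)]$ bounded away from zero and $|f'|\leq 1$, so $\rho'/\rho=f'/(r_0 f)=O(r_0^{-1})$ and $\rho^{-2}=O(r_0^{-2})$; inserting the $C^2$ decay \eqref{eq:asymp_tilde_h_w} for $\tw$ and $\tth$ makes every term besides the leading $(2-n)\rho''/\rho=(2-n)f''/(r_0^2 f)$ an $o(r_0^{-2})$, giving \eqref{eq:Delta_u_region2}. For Region I I would apply the master formula with $(\rho,k,\phi)=(r,h_i,w_i)$ to extract a formula for $\Delta u/u$; since $\tth=h_i$ and $\tw=w_i$ on Region I, subtracting yields only the $(2-n)f''/(r_0^2 f)$ term plus cross terms proportional to $\rho'/\rho-1/r=r^{-1}(rf'/(r_0 f)-1)$ or $\rho^{-2}-r^{-2}=r^{-2}(r^2/(r_0 f)^2-1)$. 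Using $|r|\geq 2r_0/3$, the decay \eqref{eq:asymp_h}--\eqref{eq:asymp_w} for $\p_r w_i$, $\tr_{h_i}(\p_r h_i)$, $\Delta_{h_i}w_i$, and the inequality $|r^2/(r_0 f)^2-1|\leq C|r/(r_0 f)-1|$ (valid because $r/(r_0 f)$ stays in a fixed compact subinterval of $(0,\infty)$), all cross terms absorb into $o(r_0^{-2})\bigl(rf'/(r_0 f)-1\bigr)+o(r_0^{-2})\bigl(r/(r_0 f)-1\bigr)$, producing \eqref{eq:Delta_u_region1}.

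The only genuinely non-routine step is recognizing the algebraic identity $(2-n)(1-n)+(n-1)(2-n)=0$; without it a term of size $r_0^{-2}(f'/f)^2$ would survive which, being of the same order as the leading term $(2-n)f''/(r_0^2 f)$, could not be absorbed into the $o(r_0^{-2})$ error and would destroy the inequality. Everything after that is careful bookkeeping, largely forced once the two specializations of the master formula are organized in parallel.
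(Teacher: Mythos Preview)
Your proposal is correct and follows essentially the same route as the paper: derive the explicit expansion of $\tDelta\tu/\tu$ via $\p_r^2+\tH\p_r+\rho^{-2}\Delta_{\tth}$ applied to $\rho^{2-n}\tw$, observe the cancellation of the $(\rho'/\rho)^2$ term, then in Region II bound each surviving term by \eqref{eq:asymp_tilde_h_w}, and in Region I subtract the $f(x)=x$ specialization (which recovers $\Delta u/u$ since $\tth=h_i$, $\tw=w_i$ there) and estimate the resulting cross terms. The only cosmetic difference is that you package the computation as a ``master formula'' in a general warping factor $\rho$, whereas the paper writes it directly for $r_0f$; your explicit flagging of the identity $(2-n)(1-n)+(n-1)(2-n)=0$ is a helpful annotation of a cancellation the paper leaves implicit in passing to \eqref{eq:Delta_u_aux1}.
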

    \begin{proof}
        In this proof, for ease of notation, we will denote $\tu_r:=\p_r \tu$, and $\tu_{rr}:=\p_r^2\tu$. Note that $\tDelta\tu=\tu_{rr}+\tH\tu_r+\frac1{r_0^2f^2}\Delta_\tth\tu$. Thus one can directly compute
        \begin{align}
            \tu^{-1}\tu_r &= (2-n)\frac{f'}{r_0f}+\frac{\tw_r}{\tw}, \\
            \tu^{-1}\tu_{rr} &= (2-n)\frac{f''}{r_0^2f}+(2-n)(1-n)\frac{(f')^2}{r_0^2f^2}+2(2-n)\frac{f'}{r_0f}\frac{\tw_r}{\tw}+\frac{\tw_{rr}}{\tw}.
        \end{align}
        So
        \begin{equation}\label{eq:Delta_u_aux1}
            \begin{aligned}
                \frac{\tDelta\tu}{\tu} &= (2-n)\frac{f''}{r_0^2f}+\frac{\tw_{rr}}{\tw}+\frac12\tr_\tth(\p_r\tth)\frac{\tw_r}{\tw}+\frac1{r_0^2f^2}\frac{\Delta_\tth\tw}{\tw} \\
                &\qquad +\frac{f'}{r_0f}\Big[(3-n)\frac{\tw_r}{\tw}+\frac{2-n}2\tr_\tth(\p_r\tth)\Big].
            \end{aligned}
        \end{equation}
        In Region II, we recall that $f,f^{-1},f'$ are all bounded and, using \eqref{eq:asymp_tilde_h_w}
        \begin{equation}\label{eq:Delta_u_aux2}
            \frac{\tw_r}{\tw}=o(r_0^{-1}),\quad \frac{\tw_{rr}}{\tw}=o(r_0^{-2}),\quad \frac{\Delta_{\tth}\tw}{\tw}=o(1),\quad \tr_{\tth}\big(\p_r\tth\big)=o(r_0^{-1}).
        \end{equation}
        Inserting these into \eqref{eq:Delta_u_aux1}, we obtain \eqref{eq:Delta_u_region2}.
        
        In Region I, recall that $\tilde w=w_i$ and $\tilde h=h_i$, so $\Delta u/u$ is recovered from this formula by inserting $f(x)=x$. Using this fact, inserting \eqref{eq:Delta_u_aux2}, and recalling that $|r^{-1}|\leq 2r_0^{-1}$ in Region I, we obtain \eqref{eq:Delta_u_region1}. 
    \end{proof}
    
    \begin{proof}[Proof of {\cref{lem:EstimatesSpectralRicciModifiedMetric}}] {\ }
        First consider Region I.
        Combining \eqref{eq:Ric_region1} with \eqref{eq:Delta_u_region1}, the sought inequality $-\gamma\tDelta\tu+\tRic\cdot\tu\geq(\lambda-\varepsilon)\tu$ is implied by 
        \[
        \begin{aligned}
            & -\gamma\frac{\Delta u}{u}+\Ric+\Big[\gamma(n-2)-(n-1)\Big]\frac{f''}{r_0^2f}-o(r_0^{-2})\Big|\frac{rf'}{r_0f}-1\Big|-o(r_0^{-2})\Big|\frac{r}{r_0f}-1\Big|\geq\lambda-\varepsilon.
        \end{aligned}
        \]
        Since on Region I we know that $-\gamma\Delta u+\Ric u=(\lambda-\varepsilon/2)u$, it suffices to show that
        \begin{equation}\label{eq:regionI_aux1}
            \Big[\gamma(n-2)-(n-1)\Big]\frac{f''}{r_0^2f}\geq o(r_0^{-2})\Big|\frac{rf'}{r_0f}-1\Big|+o(r_0^{-2})\Big|\frac{r}{r_0f}-1\Big|.
        \end{equation}
        Since $\gamma>\frac{n-1}{n-2}$, the left hand side has a positive coefficient. By the algebraic \cref{lemma:property_of_f} below, and the fact that $f$ is even, it follows that \eqref{eq:regionI_aux1} holds in Region I as long as $r_0$ is sufficiently small.
    
        Next consider Region II. Using \eqref{eq:Ric_region2} and \eqref{eq:Delta_u_region2}, we find that $-\gamma\tDelta\tu+\tRic\tu\geq(\lambda-\varepsilon)\tu$ is implied by
        \begin{equation}\label{eq:regionII_aux1}
            \big[\gamma(n-2)-(n-1)\big]\frac{f''}{r_0^2f}-o(r_0^{-2})\geq\lambda-\varepsilon.
        \end{equation}
        Since $r/r_0\in(-2/3,2/3)$ in Region II, \eqref{eq:regionII_aux1} holds for sufficiently small $r_0$ due to the strict convexity of $f$. Thus \cref{lem:EstimatesSpectralRicciModifiedMetric} is proved in Region II as well, as desired.
    \end{proof}
    \begin{lemma}\label{lemma:property_of_f}
        Let $f\in C^\infty(\mathbb R)$ be an even function that satisfies \eqref{eq:def_f}. Then 
        \begin{equation}
            \frac{f''}{f}\geq \frac{1}{2}\Big|\frac{xf'}{f}-1\Big|+\frac{1}{2}\Big|\frac{x}f-1\Big|\qquad\text{for all $x\in[1/2,1]$.}
        \end{equation}
    \end{lemma}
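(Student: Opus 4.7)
The plan is to clear the absolute values, move everything into a single clean inequality, and then reduce the problem to an estimate of $f''$ by itself using the boundary conditions at $x=1$ and the concavity of $f''$ on $(0,1)$.

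First I would observe that on $[1/2,1]$ the two absolute values in the statement can be dropped with a known sign. Indeed, since $f''>0$ on $(-1,1)$, $f'(1)=1$, and $f(1)=1$, the function $f$ lies above its tangent line at $x=1$, so $f(x)\geq x$ on $[0,1]$; combined with $|f'|<1$ this forces both $x/f\leq 1$ and $xf'/f\leq 1$ on $[1/2,1]$. After multiplying through by $2f>0$, the inequality to prove becomes
\[
2f''(x)-x\bigl(1-f'(x)\bigr)\;\geq\;2\bigl(f(x)-x\bigr),\qquad x\in[1/2,1].
\]

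Next, using $f'(1)=1$ and $f(1)=1$, I would represent the right-hand side quantities as integrals of $f''$. Integration (and Fubini) gives
\[
1-f'(x)=\int_x^1 f''(t)\,\d t,\qquad f(x)-x=\int_x^1 (s-x)f''(s)\,\d s.
\]
Substituting these into the target inequality and collecting terms inside a single integral reduces the statement to
\[
2f''(x)\;\geq\;\int_x^1 (2s-x)\,f''(s)\,\d s,\qquad x\in[1/2,1].
\]

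Finally, this last inequality follows immediately from the hypothesis $f'''<0$ on $(0,1)$: it says $f''$ is decreasing on $(0,1)$, so $f''(s)\leq f''(x)$ for $s\in[x,1]$, and a direct computation gives $\int_x^1(2s-x)\,\d s=1-x$. Hence
\[
\int_x^1 (2s-x)\,f''(s)\,\d s\;\leq\;f''(x)\,(1-x)\;\leq\;\tfrac12 f''(x)\;\leq\;2f''(x)
\]
for $x\in[1/2,1]$, using $f''\geq 0$ in the last step. I do not expect any serious obstacle here: the only mildly delicate point is the sign check that lets me drop the absolute values and the bookkeeping to land on the clean inequality $2f''(x)\geq\int_x^1 (2s-x)f''(s)\,\d s$; after that the monotonicity of $f''$ does all the work, and the restriction $x\geq 1/2$ enters precisely to give $1-x\leq 1/2$.
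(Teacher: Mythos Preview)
Your proof is correct and follows essentially the same route as the paper's: drop the absolute values using $f(x)\geq x$ and $xf'(x)\leq f(x)$, rewrite the resulting terms as integrals of $f''$ via the boundary data $f(1)=f'(1)=1$, and finish using the monotonicity of $f''$ coming from $f'''<0$. The only cosmetic difference is that the paper bounds $f-xf'$ and $f-x$ separately by $f''$, whereas you bundle them into the single inequality $2f''(x)\geq\int_x^1(2s-x)f''(s)\,\d s$; your final step $(1-x)f''(x)\leq\tfrac12 f''(x)$ is in fact stronger than needed (any $1-x\leq 2$ would do), so the restriction $x\geq 1/2$ is not actually where the argument bites.
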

    \begin{proof}[Proof of \cref{lemma:property_of_f}]
        Since $f(x)>\max\{x,0\}$ and $xf'(x)<f(x)$ in $(-1,1)$, it suffices to show
        \[
        f''(x)\geq f(x)-xf'(x)\quad\text{and}\quad f''(x)\geq f(x)-x,\qquad\forall\,x\in(1/2,1).
        \]
        For the first inequality, note that for every $x\in (1/2,1)$,
        \[
        f(x)-xf'(x)=\int_x^1tf''(t)\,\mathrm{d}t\leq \int_x^1 f''(t)\,\mathrm{d}t\leq f''(x),
        \]
        where we used that $f''>0$ and $f'''<0$ on $(1/2,1)$. For the second inequality notice that $\forall\,x\in (1/2,1)$ we have 
        \[
        f(x)-x \leq f(x)-xf'(x) \leq f''(x),
        \]
        where we used $f'\leq 1$ and the first inequality.
    \end{proof}

\section{Asymptotics of Green's function and the metric}\label{sec:AsymptoticGreen}

The aim of this section is to prove Lemma \ref{lemma-asymp:metric} and \cref{lemma-asymp:green}, which together imply \cref{lemma:asymp_h_w}. We fix the following setup: $x_0\in M$ is a base point, and $R>0$ is smaller than the injectivity radius of $x_0$. Set the radius function $r:=d(\cdot,x_0)$. Let $h_0$ be the round metric on $\SS^{n-1}$.

\begin{lemma}\label{lemma-asymp:metric}
    Suppose that in $B(x_0,R)$, the metric is expressed in polar coordinates as $g=\mathrm{d}r^2+r^2h(r,\th)$. Then we have the asymptotics at $x_0$
    \begin{equation}
        \|h-h_0\|_{C^2(h_0)}=o(1),\quad \|\p_rh\|_{C^1(h_0)}=o(r^{-1}),\quad \|\p_r^2h\|_{C^0(h_0)}=o(r^{-2}).
    \end{equation}
\end{lemma}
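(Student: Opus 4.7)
The approach I would take is to work in Riemannian normal coordinates at $x_0$ and feed the standard Taylor expansion of a smooth metric in such coordinates into the polar-to-Cartesian change of variables.

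First, let $(y^1,\ldots,y^n)$ be Riemannian normal coordinates on $B(x_0,R)$ centered at $x_0$. Write a point in this ball as $y = r\,\omega(\theta)$, where $\omega\colon \SS^{n-1}\hookrightarrow \RR^n$ is the standard embedding and $\theta$ is a local chart on $\SS^{n-1}$. A direct computation shows $\p_r = \omega^i\p_{y^i}$ and $\p_{\theta^\alpha} = r\,\frac{\p\omega^i}{\p\theta^\alpha}\p_{y^i}$, so that
\[
h_{\alpha\beta}(r,\theta) \;=\; g_{ij}(r\omega)\,\frac{\p\omega^i}{\p\theta^\alpha}\frac{\p\omega^j}{\p\theta^\beta},
\]
and the same identity with $g_{ij}$ replaced by $\delta_{ij}$ returns $(h_0)_{\alpha\beta}$ (the pull-back of the Euclidean metric under $\omega$).

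Next, I would invoke the classical fact that in normal coordinates $g_{ij}(0)=\delta_{ij}$ and $(\p_{y^k}g_{ij})(0)=0$. By Taylor's theorem, then, $g_{ij}(y)-\delta_{ij}=O(|y|^2)$ and, more generally, every $k$-th order $y$-derivative of $g_{ij}$ satisfies $\p_y^k g_{ij} = O(|y|^{\max(2-k,0)})$ on any compact subball. Substituting $y = r\omega$ into the displayed identity and using that $\omega$ and all of its $\theta$-derivatives are smooth and bounded on $\SS^{n-1}$, differentiation in $r$ and $\theta$ yields
\[
h-h_0 = O(r^2),\qquad \p_r h = O(r),\qquad \p_r^2 h = O(1),
\]
with the same type of bounds also controlling up to two $\theta$-derivatives of $h-h_0$ and one $\theta$-derivative of $\p_r h$.

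Finally, passing from Euclidean coordinate derivatives to the intrinsic $h_0$-covariant derivatives used in the $C^k(h_0)$ norm only introduces a bounded, smooth factor coming from the Christoffel symbols of $h_0$, which is harmless by the compactness of $\SS^{n-1}$. The resulting estimates $\|h-h_0\|_{C^2(h_0)} = O(r^2)$, $\|\p_r h\|_{C^1(h_0)} = O(r)$, and $\|\p_r^2 h\|_{C^0(h_0)} = O(1)$ are all strictly stronger than the stated $o(1)$, $o(r^{-1})$, $o(r^{-2})$ bounds. The argument is essentially a routine Taylor expansion once the polar/normal coordinate identification is in place, and I do not anticipate any serious obstacle; the only mild subtlety is keeping track of where the factors of $r$ come from in the chain rule for $\p_{\theta^\alpha}$, which is what allows the apparent singularity of polar coordinates at $r=0$ to be absorbed.
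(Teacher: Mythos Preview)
Your argument is correct and in fact yields sharper estimates than the paper needs: a direct Taylor expansion of the metric in normal coordinates, pushed through the identity $h_{\alpha\beta}(r,\theta)=g_{ij}(r\omega)\,\p_\alpha\omega^i\,\p_\beta\omega^j$, gives $O(r^2)$, $O(r)$, $O(1)$ rather than just $o(1)$, $o(r^{-1})$, $o(r^{-2})$. The chain-rule bookkeeping you flag (each $\theta$-derivative of $g_{ij}(r\omega)$ produces a factor of $r$ that compensates the loss of one order in the Taylor expansion) is exactly the point, and it goes through as you describe.

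The paper takes a different route. It phrases the smoothness of $g$ at $x_0$ as the blow-up statement $\lambda^{-2}(x\mapsto\lambda x)^*g\to g_0$ in $C^\infty_{\loc}(\RR^n)$, pulls this back to polar coordinates on the annulus $[1/2,2]\times\SS^{n-1}$ to obtain $\d r^2+r^2h(\lambda r,\th)\to\d r^2+r^2h_0(\th)$ in $C^\infty$, and then reads off the three asymptotics by taking successive $r$-derivatives of this convergence and evaluating at $r=1$. Your approach is more elementary and more quantitative; the paper's rescaling approach is less sharp but is deliberately set up as a template that carries over verbatim to the companion lemma on the Green's function asymptotics, where a direct Taylor expansion is not available and one instead passes through elliptic estimates, a subsequential limit, and a maximum-principle identification of that limit.
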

\begin{proof}
    In the corresponding Cartesian coordinates, we have
    \begin{equation}\label{eq-asymp:metric0}
        \lim_{\lambda\to0}\lambda^{-2}\big[(x\mapsto\lambda x)^*g\big]=g_0\qquad\text{in}\ \ C^\infty_{\loc}(\RR^n),
    \end{equation}
    where $g_0$ is the Euclidean metric. Note that the map $[1/2,2]\times\SS^{n-1}\to\RR^n:(r,\th)\mapsto r\th$ is a diffeomorphism onto its image. So we can pull back \eqref{eq-asymp:metric0} along this map and obtain
    \begin{equation}\label{eq-asymp:metric1}
        \lim_{\lambda\to0}\big[\mathrm{d}r^2+r^2h(\lambda r,\th)\big]=\mathrm{d}r^2+r^2h_0(\th)\qquad\text{in}\ \ C^\infty\big([1/2,2]\times\SS^{n-1}\big).
    \end{equation}
    Taking successive $r$-derivatives, we obtain
    \begin{align}
        & \lim_{\lambda\to0}\big[2rh(\lambda r,\th)+\lambda r^2(\p_rh)(\lambda r,\th)\big]=2rh_0(\th) \label{eq-asymp:metric2}\\
        & \lim_{\lambda\to0}\big[2h(\lambda r,\th)+4\lambda r(\p_rh)(\lambda r,\th)+\lambda^2r^2(\p_r^2h)(\lambda r,\th)\big]=2h_0(\th) \label{eq-asymp:metric3}
    \end{align}
    in $C^\infty\big([1/2,2]\times\SS^{n-1}\big)$.
    
    Insert $r=1$ in each of the equations. From \eqref{eq-asymp:metric1} we obtain $\|h(\lambda,\cdot)-h_0(\cdot)\|_{C^k(h_0)}=o(1)$ when $\lambda\to0$, for all $k\geq0$. Then inserting this into \eqref{eq-asymp:metric2} we obtain $\|(\p_rh)(\lambda,\cdot)\|_{C^k(h_0)}=o(\lambda^{-1})$, for all $k\geq 0$. Inserting these into \eqref{eq-asymp:metric3} we finally obtain $\|(\p_r^2h)(\lambda,\cdot)\|_{C^k(h_0)}=o(\lambda^{-2})$ for all $k\geq 0$, as desired.
\end{proof}

The proof of the following lemma is similar to that of \cite[Theorem 6.1]{MRS22} or \cite[Appendix A]{BMRSX25}; see also \cite[Appendix A]{DruetHebeyRobert}.

\begin{lemma}\label{lemma-asymp:green}
    Suppose $n\geq3$, $a>0$. Let $f\in \mathrm{Lip}_{\mathrm{loc}}(B(x_0,R))$, and $u>0\in C^{2,\alpha}(B(x_0,R))$ solve $-\Delta u+fu=a\delta_{x_0}$. Define $b:=\frac{a}{(n-2)|\mathbb S^{n-1}|}$, and set $w(r,\theta):=\frac {u(r,\theta)}{br^{2-n}}$ in polar coordinates $(r,\theta)$, where $\theta\in\mathbb S^{n-1}$. Then when $r\to0$ we have
    \begin{equation}\label{eq-asymp:result2}
        \|w-1\|_{C^2(h_0)}=o(1),\quad \|\p_rw\|_{C^0(h_0)}=o(r^{-1}),\quad \|\p_r^2w\|_{C^0(h_0)}=o(r^{-2}).
    \end{equation}
\end{lemma}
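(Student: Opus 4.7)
The plan is to mimic the rescaling argument used in \cref{lemma-asymp:metric}. For each small $\lambda>0$, in the polar coordinates centered at $x_0$, set
\[
u_\lambda(r,\th):=\lambda^{n-2}u(\lambda r,\th)
\]
on $B_{R/\lambda}(0)\setminus\{0\}$. A direct computation from the definition $w(r,\th)=u(r,\th)/(br^{2-n})$ gives the identity $u_\lambda(r,\th)/(br^{2-n})=w(\lambda r,\th)$. Hence, by evaluating at $r=1$ and differentiating in $r$ via the chain rule, the three asymptotics in \eqref{eq-asymp:result2} are all equivalent to the single statement
\[
u_\lambda\longrightarrow br^{2-n}\quad\text{in}\ \ C^2\bigl([1/2,2]\times\SS^{n-1}\bigr)\ \text{as}\ \lambda\to0^+.
\]
A change of variables shows that $u_\lambda$ satisfies
\[
-\Delta_{g_\lambda}u_\lambda+\lambda^2 f_\lambda\,u_\lambda=a\delta_0\qquad\text{in}\ B_{R/\lambda}(0),
\]
where $g_\lambda:=\d r^2+r^2h(\lambda r,\th)$ is the rescaled metric and $f_\lambda(y):=f(\lambda y)$. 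By \cref{lemma-asymp:metric}, $g_\lambda\to g_0$ (Euclidean) in $C^\infty_\loc(\RR^n)$, while $\lambda^2 f_\lambda\to0$ uniformly on compact subsets; hence the operator $-\Delta_{g_\lambda}+\lambda^2 f_\lambda$ is uniformly elliptic with uniformly bounded coefficients on every annulus bounded away from the origin.

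The first task is to get uniform $C^0$ bounds on $u_\lambda$ on annular regions bounded away from $0$. By Harnack's inequality, $\sup_A u_\lambda\leq C\inf_A u_\lambda$ on any fixed annulus $A=\{1/K\leq r\leq K\}$, with $C$ independent of $\lambda$. The absolute normalization is fixed by integrating the equation over $\{r\leq 1\}$:
\[
-\int_{\{r=1\}}\p_r u_\lambda\,\d\sigma_{g_\lambda}+\int_{\{r\leq 1\}}\lambda^2 f_\lambda u_\lambda\,\d V_{g_\lambda}=a,
\]
which, combined with Harnack, pins down $c\leq u_\lambda\leq C$ uniformly on $\{1/4\leq r\leq 4\}$. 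Schauder estimates then yield $C^{2,\alpha}$ bounds on $\{1/2\leq r\leq 2\}$, and by diagonalization over an exhaustion by annuli, up to a subsequence $u_\lambda\to u_\infty$ in $C^{2,\alpha}_\loc(\RR^n\setminus\{0\})$. Passing both the equation and the integral normalization to the limit, $u_\infty$ is positive and satisfies $-\Delta_{g_0}u_\infty=a\delta_0$ distributionally on $\RR^n$.

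To identify $u_\infty=br^{2-n}$, observe that $u_\infty-br^{2-n}$ is a nonnegative harmonic function on $\RR^n$, hence a constant $c\geq0$ by Liouville's theorem. To show $c=0$, use the ``outer boundary'' behavior of $u_\lambda$: since $u$ is locally bounded on $B(x_0,R)\setminus\{x_0\}$, on the shrinking sphere $\{r=R/(2\lambda)\}$ we have $u_\lambda\leq C\lambda^{n-2}\to0$. Propagating this inward via the maximum principle, using supersolutions of the form $(b+\varepsilon)r^{2-n}$ in the $g_\lambda$-metric (which solve the equation up to an error that vanishes as $\lambda\to0$, via the same deviation estimates between $g_\lambda$ and $g_0$ provided by \cref{lemma-asymp:metric}), yields $u_\infty\leq br^{2-n}$ on all of $\RR^n\setminus\{0\}$; the reverse inequality follows symmetrically with $(b-\varepsilon)r^{2-n}$. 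By uniqueness of the limit, the full family $u_\lambda$ converges (no subsequence needed), and \eqref{eq-asymp:result2} then follows by the chain rule.

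\textbf{Main obstacle.} The delicate step is the final identification: without a uniform-in-$\lambda$ decay estimate for $u_\lambda$ at large $r$, a bounded harmonic correction could in principle survive in the limit. The barrier comparison with $(b\pm\varepsilon)r^{2-n}$ on the rescaled metric, which simultaneously exploits the shrinking outer boundary values of $u_\lambda$ and the vanishing of the error between $\Delta_{g_\lambda}$ and $\Delta_{g_0}$ as $\lambda\to0$, is the key to ruling this out.
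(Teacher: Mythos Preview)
Your rescaling framework coincides with the paper's: both blow up via $u_\lambda=\lambda^{n-2}u(\lambda\,\cdot)$ and aim to show $u_\lambda\to br^{2-n}$ in $C^{2,\alpha}_{\loc}(\RR^n\setminus\{0\})$. The genuine gap is at the step ``the flux identity, combined with Harnack, pins down $c\leq u_\lambda\leq C$.'' The flux $-\int_{\{r=1\}}\partial_r u_\lambda\,d\sigma\approx a$ together with interior gradient estimates yields a \emph{lower} bound on $\sup u_\lambda$ (hence on $\inf u_\lambda$ via Harnack), but not an upper bound: nothing prevents $u_\lambda$ from being uniformly large on the annulus while its normal derivative still integrates to $a$. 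A Harnack chain from $\{r=1\}$ to $\{r=s\}$ only gives $u_\lambda\leq C_H^{|\log s|}$, with exponent unrelated to $n-2$. Without the uniform upper bound you cannot pass the distributional equation through the limit (so the claim that $u_\infty-br^{2-n}$ is harmonic on all of $\RR^n$ is unjustified), and your barrier comparison with $(b+\varepsilon)r^{2-n}$ fails at the inner boundary $\{r=s\}$, where you would need $u_\lambda\leq(b+\varepsilon)s^{2-n}$ --- exactly the estimate under proof. Your ``main obstacle'' is thus misidentified: the difficulty lies near $r=0$, not at large $r$, where $u_\lambda\leq C\lambda^{n-2}$ comes for free from the local boundedness of $u$ away from $x_0$.

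The paper fills this gap by invoking Serrin's isolated-singularity theorems to obtain the rough two-sided bound $C^{-1}r^{2-n}\leq u\leq Cr^{2-n}$ at the outset; uniform $C^0$ and hence $C^{2,\alpha}$ bounds on $u_\lambda$ on every fixed annulus then follow immediately. For the identification of the limit, instead of Liouville and barriers, the paper builds a radial supersolution $\bar G(r)$ on a model space form (transplanted via Laplacian comparison), subtracts a solved Dirichlet problem $v$, and shows by the strong maximum principle that the maximum of $(u-v)/\bar G$ over $\{s\leq r\leq R\}$ is always attained on the inner sphere. This manufactures a touching point in the blow-up limit, forcing $u_\infty=\hat\gamma\,r^{2-n}$; the constant $\hat\gamma=b$ is then read off from a flux computation applied to $u$ itself once the asymptotics are in hand.
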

\begin{proof}
    For simplicity, we denote $B(r):=B(x_0,r)$. Since the statement only involves the asymptotic behavior near $x_0$, we may decrease $R$ and assume that $f,u$ are defined in a neighborhood of $B(R)$.
    
    We may further decrease $R$ and assume that: $R$ is less than the injectivity radius at $x_0$, and $|\text{sec}|<K$ in $B(R)$ for some $K>0$. Set $F:=1+\sup_{B(R)}|f|$. Denote by $M_K$ the space form of constant curvature $K$, and $\bG=\bG(r)>0$ be a radial solution of $\Delta^{M_K}\bG=-F\bG-a\delta_o$ in the ball $B(2R)\subset M_K$. For $R$ sufficiently small, such function exists and is decreasing with respect to $r$. By elementary computations, we have
    \begin{equation}\label{eq-asymp:model_G}
        \bG(r)=\frac{a}{(n-2)|\mathbb S^{n-1}|}r^{2-n}+o(r^{2-n})\qquad\text{when}\ \ r\to0.
    \end{equation}
    By Laplacian comparison, on the original manifold $M$ we have
    \[\Delta\bar G(r)\leq f\bar G(r).\]
    Also, by \cite[Theorem 1]{Serrin}, which can be applied because $u$ is not locally bounded around $x_1$ and $x_2$ thanks to the removability theorem in \cite[Theorem 10]{SerrinActa1}, there is a constant $C$ such that
    \begin{equation}\label{eq-asymp:rough}
        C^{-1}r^{2-n}\leq u\leq Cr^{2-n}\qquad\text{in}\ \ B(R)\setminus\{x_0\}.
    \end{equation}

    Note that $u$ is a positive Green's function for the elliptic operator $-\Delta+f$. Hence, there is a solution $v$ of the boundary value problem
    \[\left\{\begin{aligned}
        & \Delta v=fv\qquad\text{in }B(R), \\
        & v=\sup_{\p B(R)}u\qquad\text{on }\p B(R).
    \end{aligned}\right.\]
    For $s\in(0,R]$, define
    \[\bar\gamma(s)=\max\Big\{\frac{u(x)-v(x)}{\bar G(r(x))}: s\leq r(x)\leq R\Big\}.\]
    Then $\bar\gamma(R)=0$ and $\bar\gamma(s)$ is non-increasing for all $s<R$, and positive for small enough $s$.
    
    \vspace{3pt}
    \noindent\textbf{Claim.} For $s\in(0,R]$ small enough, the maximum in $\bar\gamma(s)$ is attained on $\p B(s)$.

    \vspace{3pt}
    \noindent\textit{Proof.} Fix $s$. Set $q(x):=u(x)-v(x)-\bar\gamma(s)\bar G(r(x))$. By construction, $q\leq0$ on $B(R)\setminus B(s)$. By the definition of $v$, and since $\bar\gamma(s)>0$ for $s\in (0,R]$ small enough, we have $q<0$ on $\p B(R)$. Also, note that $\Delta q\geq fu-fv-\bar\gamma(s)f\bG(r)=fq$ on $B(R)\setminus B(s)$. By the strong maximum principle, $q<0$ in the interior of $B(R)\setminus B(s)$.

    \vspace{3pt}
    Back to the main proof. Define
    \begin{equation}\label{eq-asymp:gamma_star}
        \gamma^*=\limsup_{x\to x_0}\frac{u(x)}{\bar G(r(x))}=\lim_{s\to0}\bar\gamma(s),
    \end{equation}
    which is finite due to \eqref{eq-asymp:model_G} \eqref{eq-asymp:rough}.

    For each $\lambda<1$, set $g_\lambda:=\lambda^{-2}g$ and $u_\lambda:=\lambda^{n-2}u$. Then we have a smooth convergence induced by the exponential map (where $g_0$ is the Euclidean metric):
    \begin{equation}\label{eq-asymp:blow_up_metric}
        (B(R),g_\lambda,x_0)\xrightarrow{C^\infty_{\loc}}(\RR^n,g_0,0).
    \end{equation}
    By \eqref{eq-asymp:rough} and interior elliptic estimates, for each sequence $\lambda_i\to0$ there is a subsequence such that: along with the metric convergence \eqref{eq-asymp:blow_up_metric} it holds
    \begin{equation}\label{eq-asymp:blow_up_u}
        u_{\lambda_i}\xrightarrow{C^{2,\alpha}_{\loc}}u_\infty\in C^{2,\alpha}_{\loc}(\RR^n\setminus\{0\}).
    \end{equation}
    Note that each $u_\lambda$ solves $\Delta_{g_\lambda}u_\lambda=\lambda^2\Delta_gu_\lambda=\lambda^2fu_\lambda$. Therefore, $u_\infty$ is harmonic.

    By \eqref{eq-asymp:gamma_star} \eqref{eq-asymp:blow_up_metric} \eqref{eq-asymp:blow_up_u} and \eqref{eq-asymp:model_G}, for each $w\in\RR^n\setminus\{0\}$ we have
    \begin{equation}\label{eq-asymp:uinfty_1}
        \begin{aligned}
            u_\infty(w) &= \lim_{i\to\infty}\lambda_i^{n-2}u(\exp_{x_0}(\lambda_iw)) \\
            &\leq \lim_{i\to\infty}\lambda_i^{n-2}\Big(\bar\gamma(\lambda_i|w|)\bar G(\lambda_i|w|)+\|v\|_{L^\infty}\Big)
            = \hat\gamma|w|^{2-n},
        \end{aligned}
    \end{equation}
    where $\hat\gamma=\gamma^*a/((n-2)|\mathbb S^{n-1}|)$.  On the other hand, recall from the Claim  that each $\bar\gamma(\lambda_i)$ is realized at some point $\exp_{x_0}(\lambda_iw_i)$, $|w_i|=1$. There is a subsequence (which we do not relabel) so that $w_i\to w_\infty$. Computing as in \eqref{eq-asymp:uinfty_1}, we have $u_\infty(w_\infty)=\hat\gamma|w_\infty|^{2-n}$. By the strong maximum principle, we obtain
    \[u_\infty(w)=\hat\gamma|w|^{2-n}\qquad\text{on\ \ }\RR^n\setminus\{0\}.\]
    Since the sequence $\lambda_i$ is arbitrary, we in fact have $\lim_{\lambda\to0}u_\lambda=u_\infty$ in $C^{2,\alpha}_{\loc}(\RR^n\setminus\{0\})$.
    
    Let us define $\hat w(r,\theta):=\frac{u(r,\theta)}{\hat \gamma r^{2-n}}$. Thus, for the same reason as in \cref{lemma-asymp:metric}, we get
    \[
    \lim_{\lambda\to 0} \big[r^{2-n}\cdot \hat\gamma\cdot\hat w(\lambda r,\theta)\big] = \lim_{\lambda\to 0}\frac{u(\lambda r,\theta)}{\lambda^{2-n}} = \hat\gamma r^{2-n}\qquad\text{in}\ \ C^\infty\big([1/2,2]\times\SS^{n-1}\big).
    \]
    Thus, taking succesive $r$-derivatives, and evaluating at $r=1$ as in \cref{lemma-asymp:metric}, we get
    \begin{equation}\label{eq-asymp:result2hat}
        \|\hat w-1\|_{C^2(h_0)}=o(1),\quad \|\p_r\hat w\|_{C^0(h_0)}=o(r^{-1}),\quad \|\p_r^2\hat w\|_{C^0(h_0)}=o(r^{-2}).
    \end{equation}
    
    Let us show that $\hat\gamma=b$. By \eqref{eq-asymp:result2hat} we get that 
    \begin{equation}\label{eqn:FinalAsymp}
    u=\hat\gamma r^{2-n}(1+o(1)), \qquad \nabla u=\hat\gamma(2-n)r^{1-n}\nabla r(1+o(1)).
    \end{equation}
    Now let us use integration by parts. Take radii $0<\rho<R$ and a cutoff function $\varphi\in C^\infty_0(B(\rho))$ with $\varphi(x_0)=1$, $0\leq\varphi\leq1$. Using $-\Delta u+fu=a\delta_{x_0}$ and \eqref{eqn:FinalAsymp} we have
    \[\begin{aligned}
        a &= \int_{B(\rho)}fu\varphi+\metric{\D u}{\D\varphi}=o_\rho(1)+\int_{B(\rho)}\hat\gamma(2-n)r^{1-n}\metric{\D r}{\D\varphi} \\
        &= o_\rho(1)+\int_{\mathbb S^{n-1}}\int_0^\rho\hat\gamma(2-n)r^{1-n}\frac{\p\varphi}{\p r}\cdot r^{n-1}\big(1+o_\rho(1)\big)\,\mathrm{d}r\,\mathrm{d}V_{S^{n-1}} \\
        &= o_\rho(1)-\hat\gamma(2-n)|\mathbb S^{n-1}|\big(1+o_\rho(1)\big).
    \end{aligned}\]
    Hence $\hat\gamma=\frac{a}{(n-2)|\mathbb S^{n-1}|}=b$ and \eqref{eq-asymp:result2} directly follows from \eqref{eq-asymp:result2hat}.
\end{proof}

\printbibliography[heading=bibintoc,title={Bibliography}]

\end{document}